\renewcommand{\epsilon}{\varepsilon}
\newtheorem{theorem}{Theorem}[section]
\newtheorem{prop}[theorem]{Proposition}
\newtheorem{cor}[theorem]{Corollary}
\newtheorem{lemma}[theorem]{Lemma}
\newtheorem{remark}[theorem]{Remark}
\newcommand{\R}{\mathbb{R}}
\newcommand{\N}{\mathbb{N}}
\newcommand{\Z}{\mathbb{Z}}
\newcommand{\mc}{\mathcal}
\newcommand{\ep}{\varepsilon}
\def\Res{\operatorname{Res}}
\newenvironment{sistema}
{\left\lbrace\,\begin{array}{@{}l@{}}} {\end{array}\right.}
\title[A new approach for the study of limit cycles]{A new approach for the study of limit cycles}
\subjclass[2010]{Primary 34C25, 34C07; Secondary 34C37.}
\keywords{Periodic orbits, Limit cycle, Abelian integral, Heteroclinic solution, Reversible center, Algebraic limit cycle}
\author{J.D. Garc\'{\i}a-Salda\~{n}a}
\address{Departamento de Matem\'{a}tica y F\'isica Aplicadas,
Universidad Cat\'olica de la Sant\'isima Concepci\'on, Alonso de Ribera 2850, Concepci\'on, Chile.}
\email{jgarcias@ucsc.cl}
\author{A. Gasull}
\address{Departament de Matem\`{a}tiques,
Universitat Aut\`{o}noma de Barcelona, Edifici C 08193 Bellaterra, Barcelona, Spain.} \email{gasull@mat.uab.cat}
\author{H. Giacomini}
\address{ Institut Denis Poisson. Universit\'{e} de Tours,  C.N.R.S. UMR 7013. 37200 Tours, France.} \email{Hector.Giacomini@lmpt.univ-tours.fr}
\date{}
\begin{document}

\begin{abstract}
We prove that  star-like limit cycles of any planar polynomial system can also be seen either as solutions
defined on a given interval of a new associated planar non-autonomous polynomial
system or as heteroclinic solutions of a 3-dimensional polynomial system. We illustrate these points of view with several examples.  One of the key ideas in our approach is to decompose the periodic solutions as the sum of two suitable functions. As a first application we use these new approaches to prove that all star-like reversible limit cycles are algebraic. As a second application   we introduce 
 a function whose zeroes control the periodic orbits that persist as limit cycles
when we perturb a star-like reversible center. As far as we know  this is the first time that this question is solved in full generality. Somehow, this function plays a similar role that an
Abelian integral for studying perturbations of Hamiltonian systems.

\end{abstract}

\maketitle

\section{Introduction and  results}

Consider a planar differential system
\begin{align}\label{sis:pla}
\dot{x}=X(x,y),\qquad
\dot{y}=Y(x,y),
\end{align}
where $X$ and $Y$ are polynomial functions vanishing at the origin.
The most elusive problem about this system is to know its number of
limit cycles. Many efforts have been dedicated to this objective and
the reader can consult several books, and their references, where
this question is addressed, see \cite{Llibre,Yakovenko,Ye,Zhang}. Of
course, it is also very related with the celebrated XVIth Hilbert's
problem, see \cite{Ilyashenko,JibinLi}.

The goal of this work is to present a new approach to study the so
called star-like limit cycles. To give a first description of
our approach we start introducing some notation.

By applying the polar coordinates transformation $x=r\cos\theta$,
$y=r\sin\theta$ to system~\eqref{sis:pla} and by eliminating the
variable $t$, we obtain the associated non-autonomous differential
equation
\begin{equation}\label{ecpol}
\frac{{\rm d}r}{{\rm d}\theta}=\frac{R\big(r\cos\theta,r\sin\theta\big)}{T\big(r\cos\theta,r\sin\theta\big)},
\end{equation}
where
\begin{align*}
R(x,y)=\big(x X(x,y)+ y Y(x,y)\big )/r \quad \mbox{and}\quad T(x,y)=\big(x Y(x,y)- y
X(x,y)\big)/r^2.
\end{align*}
Notice that \eqref{ecpol}  is well defined in the region
$\mathbb{R}^2\setminus \mathcal{T},$ where $\mathcal{T}=
\{(x,y)\,:\,T(x,y) =0\}.$ Moreover, the periodic solutions of
\eqref{sis:pla} that do not intersect $\mathcal T$ correspond to
smooth $2\pi$-periodic solutions of \eqref{ecpol}. These periodic
orbits are usually called {\it star-like} periodic orbits. In
particular when these $2\pi$-periodic solutions are isolated in the
set of periodic solutions they will correspond to star-like
limit cycles.

As we will prove in Theorem~\ref{pr:fg} and Corollary~\ref{co:fg} each smooth star-like periodic orbit
$r=F(\theta)$ can be written
uniquely as
\begin{equation}\label{r:cs}
r=F(\theta)= f(\sin \theta)+g(\sin\theta)\cos \theta,
\end{equation}
for some $f,g\in \mathcal{C}^\infty([-1,1]),$ that is, both functions are $\mathcal{C}^\infty$  in an open  neighborhood of $[-1,1].$   A similar decomposition, but involving the functions $\sin(2\pi t/T)$ and $\cos(2\pi t/T),$ can also be applied to each of the components of any $T$-periodic solution of a $k$-dimensional differential system, parameterized by the time $t$. We believe that this point of view will help to a better understanding of this type of solutions. In this paper we will concentrate on the planar case.

The starting point of this work is to use equation \eqref{r:cs}
and an associated non-autonomous polynomial differential system
involving $u,f(u)$ and $g(u)$ to characterize the  star-like limit cycles of
system \eqref{sis:pla} as the solutions with interval of definition including  all $u$ in $[-1,1].$ These solutions can also be seen as heteroclinic solutions of a $3d$-autonomous system joining singularities contained in the planes $u=-1$ and  $u=1,$ see Theorems~\ref{teo:sisf'g'} and~\ref{teo} in next section.    As applications of this approach:

\begin{itemize}

\item We give a systematic procedure to find reversible
star-like limit cycles with respect to a straight line and prove that all of  them are algebraic, see Theorem~\ref{te:stl}.

\item We introduce a function, similar to an Abelian integral, that
controls which of the periodic orbits of a reversible star-like
center can persist after a perturbation of the system, see Theorem
\ref{te:N(a)}.
\end{itemize}

Let us describe with more detail our main results given in the above two items. 

A planar system is called {\it reversible (with respect to a straight line)} if 
after a rotation it is  invariant by the change of
variables and time $(x,y,t)\rightarrow (-x,y,-t).$ These vector fields can be written as
\[
\dot{x}= A(x^2,y),\qquad\qquad \dot{y}=x B(x^2,y),
\]
for some smooth functions $A$ and $B.$ Similarly, a limit cycle is called {\it reversible} if after a translation and a rotation writes as  $(x(t),y(t))$ and it satisfies that  $(x(t),y(t))=(-x(-t),y(-t))$ for all $t\in\R.$   Recall also that a limit cycle is called algebraic if it is included in the zero set of a real polynomial in two variables, that is in $G(x,y)=0,$ for some polynomial $G.$ In Section~\ref{se:rlc}  we prove:
 
 \begin{theorem}\label{te:stl}{Star-like reversible limit cycles of polynomial systems are algebraic.}\end{theorem}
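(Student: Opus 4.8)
The plan is to combine the canonical decomposition from Theorem~\ref{pr:fg} with the reversibility symmetry to show that the functions $f$ and $g$ appearing in~\eqref{r:cs} are in fact \emph{polynomial}, from which algebraicity of the curve follows almost immediately. First I would observe that for a reversible system, after the normalizing rotation, the polar equation~\eqref{ecpol} inherits a symmetry under $\theta\mapsto\pi-\theta$ together with $r\mapsto r$ (this is the reflection $x\mapsto-x$ in polar form). A star-like limit cycle $r=F(\theta)$ that is reversible must be invariant under this symmetry, so $F(\pi-\theta)=F(\theta)$. Writing $F(\theta)=f(\sin\theta)+g(\sin\theta)\cos\theta$ and noting that $\sin(\pi-\theta)=\sin\theta$ while $\cos(\pi-\theta)=-\cos\theta$, the invariance forces $g(\sin\theta)\cos\theta=-g(\sin\theta)\cos\theta$, hence $g\equiv 0$. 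Thus a reversible star-like limit cycle has the special form
\begin{equation*}
r=F(\theta)=f(\sin\theta),\qquad f\in\mathcal{C}^\infty([-1,1]).
\end{equation*}

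The next step is to promote $f$ from merely smooth to polynomial. Substituting $r=f(\sin\theta)$ into the polar equation~\eqref{ecpol} and using $u=\sin\theta$, $\mathrm{d}r/\mathrm{d}\theta=f'(u)\cos\theta$, I would obtain an algebraic (polynomial) relation among $u$, $f(u)$, $f'(u)$, and $\cos\theta=\pm\sqrt{1-u^2}$; the odd powers of $\cos\theta$ coming from $R,T$ being odd or even in $x$ can be separated using the reversible structure $X=A(x^2,y)$, $Y=xB(x^2,y)$. This should yield a genuine polynomial ordinary differential equation $P(u,f(u),f'(u))=0$ satisfied by $f$ on the whole interval $[-1,1]$, i.e.\ on an interval with nonempty interior. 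The heart of the argument is then to argue that a $\mathcal{C}^\infty$ solution of such a polynomial ODE, which by the earlier theorems extends as a bona fide solution across $u=\pm1$ (the singularities of~\eqref{ecpol}), must be an algebraic function of $u$; combined with the boundary/analyticity conditions forced by passage through the singular planes $u=\pm1$, this pins $f$ down to a polynomial (or at worst an algebraic function whose graph is contained in an algebraic curve).

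Finally, once $r=f(\sin\theta)$ with $f$ algebraic, I would convert back to Cartesian coordinates. Since $\sin\theta=y/r$ and $r=\sqrt{x^2+y^2}$, the relation $r=f(y/r)$ becomes, after clearing the radical using that only even powers of $\cos\theta$ (equivalently of $x$) survive, a polynomial equation $G(x^2,y)=0$; eliminating the remaining $\sqrt{x^2+y^2}$ by the standard resolvent/resultant procedure produces a polynomial $G(x,y)$ vanishing on the limit cycle, establishing algebraicity.

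The main obstacle I anticipate is the middle step: passing from ``$f$ is a smooth solution of a polynomial ODE'' to ``$f$ is algebraic.'' A smooth or even analytic solution of a polynomial ODE need not be algebraic in general, so the argument must genuinely exploit the global constraint that the solution is defined and regular on the \emph{closed} interval $[-1,1]$ and connects the specific singular points on $u=\pm1$ (the heteroclinic/boundary picture of Theorems~\ref{teo:sisf'g'} and~\ref{teo}). I expect the key leverage to come from the fact that the associated autonomous $3d$ system has the relevant orbit as a heteroclinic connection between elementary (e.g.\ hyperbolic or semi-hyperbolic) singularities, whose stable/unstable manifolds are algebraic, forcing the connecting trajectory—and hence $f$—to lie on an invariant algebraic set.
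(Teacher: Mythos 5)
Your opening and closing steps are sound and coincide with the paper's: the symmetry $F(\pi-\theta)=F(\theta)$ of a reversible star-like limit cycle forces $g\equiv 0$ in the decomposition~\eqref{r:cs}, and at the end one clears $r=\sqrt{x^2+y^2}$ by splitting into even and odd powers of $r$ and squaring. The genuine gap is in the middle, and it is the whole theorem. When you substitute $r=f(\sin\theta)$ into~\eqref{ecpol} and separate the even and odd powers of $\cos\theta$, the uniqueness of the decomposition (Theorem~\ref{pr:fg}) gives you \emph{two} independent polynomial equations, namely system~\eqref{sis:f'g'} with $g=0$: $(1-u^2)T_1f'(u)-R_0=0$ and $T_0f'(u)-R_1=0$. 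You collapse these into a single ODE $P(u,f,f')=0$ and are then forced to argue that a smooth (or heteroclinic) solution of one polynomial ODE must be algebraic. As you yourself concede, that implication is false in general, and the rescue you propose is not available either: stable and unstable manifolds of hyperbolic or semi-hyperbolic singular points of polynomial vector fields are generically \emph{not} algebraic, so the heteroclinic picture of~\eqref{sis:3d} gives no such leverage. The paper needs no miracle of this kind: having two first-order equations for the single unknown $f$, it eliminates $f'$ between them and obtains the purely algebraic, zeroth-order constraint $\mathcal{H}(f,u):=R_0T_0-(1-u^2)R_1T_1=0$, which must hold identically along the orbit. When $\mathcal{H}\not\equiv 0$ this algebraic curve contains the limit cycle, and the Cartesian conversion you describe finishes the proof.

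A second, related misreading compounds the problem: you work with the polar equation of a reversible \emph{system} $X=A(x^2,y)$, $Y=xB(x^2,y)$, whereas the theorem concerns reversible \emph{limit cycles} of an arbitrary polynomial system; only the orbit carries the symmetry. This is not cosmetic. If the system itself is reversible then, as in Lemma~\ref{le_f'rev}, one has $R_0\equiv 0$ and $T_1\equiv 0$, the first of the two equations is identically satisfied, $\mathcal{H}\equiv 0$, and the elimination yields nothing --- consistently with the fact that such systems have star-like centers whose periodic orbits need not lie on algebraic curves (see the remark following the proof in Section~\ref{se:rlc}). The overdetermination that drives the argument exists precisely because the system is \emph{not} assumed reversible while the orbit is.
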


An auxiliary result that  we use to prove the above theorem is given in the  self-contained Section~\ref{se:com}, where we develop a method for detecting whether two different polynomial systems share a  trajectory.

About our second application, consider any perturbed reversible system:
\begin{align}\label{sRev_per}
\dot{x}= A(x^2,y)+\ep C(x,y),\qquad\qquad \dot{y}=x B(x^2,y)+\ep
D(x,y),
\end{align}
where we  assume that in the punctured neighborhood  $\mathcal{U}$ of the origin, the function  $x^2B(x^2,y)-y
A(x^2,y),$ that gives the sign of $\dot\theta$, does not vanish and $C(0,0)=D(0,0)=0.$ This set $\mathcal{U}$ is non empty for instance when
 $A(0,y)=ay+O_2(y),$ where $O_2(y)$ denotes
terms of order at least 2 in $y,$ and $B(0,0)=b$, $ab<0,$ because
then the origin is a
 monodromic non-degenerate critical point. Notice that under these
 hypotheses and in this region, when $\ep=0$ the origin is a
 star-like reversible center.

To state the results of our second application we also need the notation  given in the following lemma. 

\begin{lemma}\label{le:des} Introduce the  variables $f,g,u,v,$ with $v^2=1-u^2,$ and consider
	\[
	x=(f+vg)v,\quad y=(f+vg)u.
	\]
	Then, any polynomial $H(x,y)$
	writes, using that $v^{2k+1}=v(1-u^2)^k$, as
	\begin{equation*}
	H(x,y)=H\big((f+vg)v,(f+vg)u)\big)=H_0(f,g,u)+v H_1(f,g,u).
	\end{equation*}
	Moreover, when $H(x,y)=K(x^2,y)$ for some polynomial $K,$
\[H_0(f,g,u) =K_{0,0}(f,u)+ (1-u^2) g^2 
K_{0,1}(f,g,u)\mbox{ and } H_1(f,g,u) =g K_{1,0}(f,g,u),\] for some
polynomials $K_{0,0}, K_{0,1}$ and $K_{1,0},$ where
$K_{0,0} (f,u)=K\big((1-u^2)f^2,u f\big).$	
\end{lemma}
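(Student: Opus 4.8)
The plan is to work in the quotient ring $A=\R[f,g,u][v]/(v^2-(1-u^2))$, which is a free module of rank two over $\R[f,g,u]$ with basis $\{1,v\}$; this freeness gives at once both the existence and the uniqueness of the decomposition $H=H_0+vH_1$. Concretely, substituting $x=(f+vg)v$ and $y=(f+vg)u$ into the polynomial $H(x,y)$ produces an element of $A$, and repeatedly using $v^{2k+1}=v(1-u^2)^k$ together with $v^{2k}=(1-u^2)^k$ rewrites every monomial in the normal form $H_0(f,g,u)+vH_1(f,g,u)$, with $H_0,H_1\in\R[f,g,u]$; uniqueness of these two coefficients is exactly the freeness of the module.

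For the refined statement when $H(x,y)=K(x^2,y)$, the key idea is a parity/symmetry argument. Consider the ring automorphism $\sigma$ of $A$ determined by $g\mapsto -g$, $v\mapsto -v$ (and $f,u$ fixed); it respects the defining relation $v^2-(1-u^2)$ and satisfies $\sigma(f+vg)=f+vg$, whence $\sigma(x)=-x$ and $\sigma(y)=y$. Therefore $\sigma$ fixes $K(x^2,y)$, and comparing $H_0+vH_1$ with its image $H_0(f,-g,u)-vH_1(f,-g,u)$ via uniqueness forces $H_0$ to be even in $g$ and $H_1$ to be odd in $g$. Oddness of $H_1$ gives $H_1(f,0,u)=0$, so $g\mid H_1$, which yields $H_1=gK_{1,0}(f,g,u)$.

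For $H_0$, I would first evaluate at $g=0$: the homomorphism $g\mapsto 0$ sends $x^2=f^2v^2$ to $f^2(1-u^2)$ and $y$ to $fu$, so $H_0(f,0,u)=K\big((1-u^2)f^2,uf\big)=K_{0,0}(f,u)$. Since $H_0-K_{0,0}$ is even in $g$ and vanishes at $g=0$, it is divisible by $g^2$. Next I would evaluate through the two homomorphisms $u\mapsto\pm1$, $v\mapsto0$, each of which respects $v^2-(1-u^2)$; here $x\mapsto 0$ and $y\mapsto\pm f$, giving $H_0(f,g,\pm1)=K(0,\pm f)=K_{0,0}(f,\pm1)$, independently of $g$. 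Thus $H_0-K_{0,0}$ vanishes identically at $u=1$ and at $u=-1$, so it is divisible by $(u-1)(u+1)=-(1-u^2)$. As $g^2$ and $1-u^2$ are coprime in $\R[f,g,u]$, the product $g^2(1-u^2)$ divides $H_0-K_{0,0}$, giving $H_0=K_{0,0}(f,u)+(1-u^2)g^2K_{0,1}(f,g,u)$, as claimed.

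The routine part is the reduction to normal form. The step requiring the most care is arranging the three evaluation maps ($g\mapsto0$ and $u\mapsto\pm1,\,v\mapsto0$) so that each is a genuine ring homomorphism on $A$, compatible with $v^2=1-u^2$ and commuting with the normal-form reduction, so that comparison of the $1$- and $v$-coefficients is legitimate; this, together with the elementary coprimality argument combining divisibility by $g^2$ with divisibility by $1-u^2$, is the crux of the argument.
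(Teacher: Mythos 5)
Your proof is correct, but it follows a genuinely different route from the one in the paper. The paper's proof is a two-line computation: for the first part it reduces powers of $v$ monomial by monomial, and for the refined statement it writes $x^2=(1-u^2)f^2+(1-u^2)^2g^2+2(1-u^2)vfg$ and $y=uf+uvg$ and performs the finite Taylor expansion of $K$ at the base point $\big((1-u^2)f^2,uf\big)$; since every increment carries a factor $g$, and the $v$-even contributions beyond the constant term carry at least $g^2$ together with a factor $1-u^2$ (either from $(1-u^2)^2g^2$ or from $v^2=1-u^2$ when two odd-$v$ increments pair up), the divisibility claims drop out directly, and the expansion even exhibits $K_{0,1}$ and $K_{1,0}$ explicitly in terms of partial derivatives of $K$. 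You instead argue structurally: freeness of $\R[f,g,u][v]/(v^2-(1-u^2))$ over $\R[f,g,u]$ with basis $\{1,v\}$ gives existence and uniqueness of the decomposition, the involution $g\mapsto-g$, $v\mapsto-v$ (which fixes $x^2$ and $y$) forces the parities of $H_0$ and $H_1$ in $g$, and the evaluation homomorphisms $g\mapsto0$ and $u\mapsto\pm1,\ v\mapsto0$ combined with coprimality of $g^2$ and $1-u^2$ in the UFD $\R[f,g,u]$ yield the factorizations. All three of your evaluations do respect the relation $v^2=1-u^2$ and commute with taking normal-form coefficients, so the argument is sound. What your approach buys is a clean, computation-free proof that also establishes uniqueness of $H_0,H_1$ (which the paper uses later but obtains from the analytic uniqueness in Theorem~\ref{pr:fg}); what the paper's Taylor-expansion approach buys is brevity and explicit, constructive formulas for the cofactors $K_{0,1}$ and $K_{1,0}$, which are what actually get computed in the applications.
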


As we will see in Lemma~\ref{le_f'rev} the solutions of the unperturbed system~\eqref{sRev_per}$_{\ep=0}$   with initial condition $r(0)=\rho$ are  $r=f_\rho(\sin
\theta)$ where $f_\rho(u)$ is the solution of the first order
Cauchy initial value problem
\begin{equation}\label{f'_rev}
f^{\prime}=\frac{\left(A_{0,0}+u  B_{0,0} f \right)
	f}{(1-u^2)B_{0,0} f -uA_{0,0}}, \quad f(0)=\rho,
\end{equation}
where  $A_{0,0}=A\big((1-u^2)f^2,uf\big)$ and
$B_{0,0} =B\big((1-u^2)f^2,u f\big)$.

Our main result is:
\begin{theorem}\label{te:N(a)}
For each $\rho>0,$ such that $r=f_{\rho}(\sin\theta)$ is periodic orbit  of the unperturbed system~\eqref{sRev_per}$_{\ep=0},$ we define
\begin{equation}\label{N(a)}
N(\rho)=\int_{-1}^1M_{\rho}(u){\rm e}^{-\int_0^u L_{\rho}(s){\rm d}s}{\rm d}u,
\end{equation}
where
\begin{equation*}
L_\rho(u)=\frac{S_\rho(u)}{(1-u^2)(uA_{0,0}-(1-u^2)B_{0,0}f_{\rho}(u))^2},
\end{equation*}
 with
\begin{multline*}
S_{\rho}(u)=(1-u^2)\big(A_{1,0}B_{0,0}-A_{0,0}B_{1,0}+2u(1-u^2)B_{0,0}^2\big)f_{\rho}^2(u)
\\-4u^2(1-u^2) A_{0,0}B_{0,0}f_{\rho}(u)-u(1-2u^2)A_{0,0}^2
\end{multline*} and
\begin{equation*}
M_{\rho}(u)=\frac{-f_{\rho}(u)\big(A_{0,0}{D}_0-(1-u^2)B_{0,0}{C}_1
	f_{\rho}(u)\big)}{(1-u^2)\big(uA_{0,0}-(1-u^2)B_{0,0}f_{\rho}(u)\big)^2},
\end{equation*}
 where all the functions $D_0, C_1,A_{0,0}, A_{1,0},B_{0,0}$ and $B_{1,0},$ introduced in Lemma~\ref{le:des}, are evaluated  either at  $(f,g,u)=(f_\rho(u),0,u)$ or at  $(f,u)=(f_\rho(u),u)$.
If  for $\ep>0$ small enough, there is a continuous family $r=r(\theta,\ep)$ of limit cycles of~\eqref{sRev_per} such that $r(0,\ep)\to \rho_0$ as $\ep$ goes to zero, then $N(\rho_0)=0$.
\end{theorem}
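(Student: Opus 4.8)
The plan is to read $N(\rho_0)=0$ as the first-order closing condition for the perturbed orbit, expressed in the $(f,g,u)$ coordinates of Lemma~\ref{le:des}. First I would invoke Theorem~\ref{teo:sisf'g'} together with Lemma~\ref{le:des} to rewrite the perturbed system~\eqref{sRev_per} as a non-autonomous polynomial system for $f$ and $g$ as functions of $u=\sin\theta$, carrying the parameter $\ep$; write it schematically as $f'=P(f,g,u,\ep)$, $g'=Q(f,g,u,\ep)$. By Corollary~\ref{co:fg} a star-like limit cycle corresponds to a solution $(f,g)$ that is defined and smooth on the whole interval $[-1,1]$. For $\ep=0$ the reversibility forces every periodic orbit to satisfy $r(\theta)=r(\pi-\theta)$, which in the decomposition~\eqref{r:cs} means $g\equiv0$; hence $\{g=0\}$ is invariant and $f=f_\rho$ solves~\eqref{f'_rev}. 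Consequently $Q(f,0,u,0)\equiv0$, so one may write $Q=g\,h(f,g,u,\ep)+\ep\,k(f,u,\ep)$ with $h,k$ smooth.

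Next I would linearize about $(f_{\rho_0},0)$. Since the family of limit cycles is continuous with $r(0,\ep)\to\rho_0$ and the right-hand sides are smooth in $\ep$, I expand $f=f_{\rho_0}+\ep f_1+O(\ep^2)$ and $g=\ep g_1+O(\ep^2)$. The factorization $Q=g\,h+\ep\,k$ is exactly what makes the variational equation for the reversibility-breaking part $g_1$ decouple from $f_1$: collecting the $O(\ep)$ terms gives the closed scalar linear ODE
\[
g_1'=h(f_{\rho_0},0,u,0)\,g_1+k(f_{\rho_0},u,0).
\]
A direct computation with Lemma~\ref{le:des}, using that on $\{g=0\}$ only the components $D_0$ and $C_1$ of the perturbation survive, should identify $h(f_{\rho_0},0,u,0)=L_{\rho_0}(u)$ and $k(f_{\rho_0},u,0)=M_{\rho_0}(u)$ with the expressions in the statement; this is where the explicit polynomials $S_\rho$ and the factors $A_{0,0},A_{1,0},B_{0,0},B_{1,0}$ enter.

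Then I would solve this equation by the integrating factor $\phi(u)={\rm e}^{\int_0^u L_{\rho_0}(s)\,{\rm d}s}$, so that $g_1(u)=\phi(u)\big(c+\int_0^u M_{\rho_0}(s)/\phi(s)\,{\rm d}s\big)$ for some constant $c$. The crucial point is the behaviour at $u=\pm1$: there $1-u^2\to0$ while $uA_{0,0}-(1-u^2)B_{0,0}f_{\rho_0}\to\pm A_{0,0}\neq0$ by the monodromy hypothesis, so both $L_{\rho_0}$ and $M_{\rho_0}$ have a simple pole coming from the factor $(1-u^2)$ in their denominators, and $\phi$ is forced to blow up at both endpoints (this is also what makes the integral defining $N$ convergent). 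Because the limit cycles of~\eqref{sRev_per} yield genuine smooth closed orbits, the first-order term $g_1$ must remain bounded as $u\to\pm1$; with $\phi\to\infty$ at both ends, boundedness forces the bracket to vanish at each endpoint, giving $c=-\int_0^1 M_{\rho_0}/\phi$ and $c=\int_{-1}^0 M_{\rho_0}/\phi$. Their compatibility is precisely $\int_{-1}^1 M_{\rho_0}(u)\phi(u)^{-1}\,{\rm d}u=0$, that is, $N(\rho_0)=0$.

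I expect the main obstacle to be the analysis at the singular endpoints $u=\pm1$: one must justify that the singular linear ODE for $g_1$ admits a solution bounded at both singular points only under the stated integral condition, and that such a bounded $g_1$ is genuinely the first-order part extracted from the merely continuous family $r(\theta,\ep)$. The remaining work is the bookkeeping needed to match $h$ and $k$ with the explicit $L_\rho$ and $M_\rho$, which is routine given Lemma~\ref{le:des} but demands care with the parity structure that singles out $C_1$ and $D_0$.
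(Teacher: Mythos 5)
Your proposal follows essentially the same route as the paper: pass to the $(f,g,u)$ system of Theorem~\ref{teo:sisf'g'}, use reversibility to reduce the unperturbed dynamics to $g\equiv 0$ with $f=f_\rho$ solving~\eqref{f'_rev}, expand $f=f_\rho+\ep P+O(\ep^2)$, $g=\ep Q+O(\ep^2)$, extract the decoupled linear ODE $Q'=L_\rho Q+M_\rho$, and derive $N(\rho_0)=0$ as the compatibility of the two boundedness conditions at $u=\pm1$ imposed by the divergence of the integrating factor. This matches the paper's argument step for step, including the point you flag as delicate (the paper likewise imposes well-definedness of $Q$ at $u=\pm1$ without a more detailed analysis of the singular endpoints).
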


Recall that Abelian integrals are used to study the same problem but
for perturbations of Hamiltonian systems and that this approach can be
easily extended to study the perturbations of systems with an
integrating factor, see for instance~\cite{CL}. Theorem~\ref{te:N(a)} is similar but for perturbations of reversible centers.
As far as we know, this is the first time that this
question is solved in full generality for systems having a star-like
reversible center.  We are studying if simple zeroes of $N$ always give rise to limit cycles of the perturbed system.

In Section~\ref{se:abe} we apply Theorem~\ref{te:N(a)} to several examples.

\section{General results}

We start proving the  general decomposition result stated in~\eqref{r:cs} for analytic $2\pi$-periodic solutions that, as we have already said, is one of the key points of our approach.  Next lemma shows that the functions $f$ and $g$ are analytic in $(-1,1).$

\begin{lemma}\label{lem:ana}
	Let $F(\theta)$ be a real analytic $2\pi$-periodic function.  Then: 
	\begin{enumerate}[(i)] 
		\item There exist two unique analytic functions  $f$ and $g$ defined in $(-1,1)$   such that  
		\begin{equation*}
			F(\theta)= f(\sin \theta)+g(\sin\theta)\cos \theta.
		\end{equation*}
		
		\item There exist two unique analytic functions  $\bar f$ and $\bar g$ defined in $(-1,1)$   such that  
		\begin{equation}\label{r:cs22}
		F(\theta)= \bar f(\cos \theta)+\bar g(\cos\theta)\sin \theta.
		\end{equation}
	\end{enumerate}
\end{lemma}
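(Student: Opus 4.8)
The plan is to exploit the reflection $\sigma:\theta\mapsto\pi-\theta$, under which $\sin\theta$ is invariant while $\cos\theta$ changes sign, together with the $2\pi$-periodicity of $F$. If a decomposition as in~(i) existed, then applying $\sigma$ would give $F(\pi-\theta)=f(\sin\theta)-g(\sin\theta)\cos\theta$; adding and subtracting this from $F(\theta)$ yields the explicit formulas
\[
f(\sin\theta)=\tfrac12\big(F(\theta)+F(\pi-\theta)\big)=:P(\theta),\qquad g(\sin\theta)\cos\theta=\tfrac12\big(F(\theta)-F(\pi-\theta)\big)=:Q(\theta).
\]
This immediately settles uniqueness, so the real work is existence: showing that the symmetric part $P$ and the antisymmetric part $Q$ genuinely factor through $u=\sin\theta$ and define analytic functions of $u$.

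First I would check that $P$ and $Q/\cos\theta$ depend on $\theta$ only through $\sin\theta$. The equality $\sin\theta_1=\sin\theta_2$ forces $\theta_2\equiv\theta_1$ or $\theta_2\equiv\pi-\theta_1\pmod{2\pi}$; periodicity covers the first case, and the identities $P\circ\sigma=P$, $Q\circ\sigma=-Q$ (with $\cos\circ\,\sigma=-\cos$) cover the second. Hence $f(u):=P(\arcsin u)$ and $g(u):=Q(\arcsin u)/\sqrt{1-u^2}$ are well defined; on the open interval $(-1,1)$ they are compositions of the analytic function $F$ with the analytic maps $\arcsin$ and $u\mapsto\sqrt{1-u^2}$ (the latter nonvanishing there), so $f,g$ are analytic on $(-1,1)$, and by construction $P(\theta)+Q(\theta)=F(\theta)$ reproduces the decomposition.

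The main obstacle is analyticity at the endpoints $u=\pm1$, where $\arcsin$ is singular and $\cos\theta$ vanishes; here the parity of $P$ and $Q$ must absorb these singularities. Near $\theta=\pi/2$ set $\phi=\theta-\pi/2$, so that $u=\sin\theta=\cos\phi=\Phi(\phi^2)$ with $\Phi$ analytic, $\Phi(0)=1$ and $\Phi'(0)\neq0$. Since $P\circ\sigma=P$ makes $P$ even in $\phi$ and $Q\circ\sigma=-Q$ makes $Q$ odd in $\phi$, while $\cos\theta=-\sin\phi$ is odd with a simple zero at $\phi=0$, I would write $P=\tilde P(\phi^2)$ and $Q/\cos\theta=\tilde g(\phi^2)$ for analytic germs $\tilde P,\tilde g$ (an even analytic function of $\phi$ is an analytic function of $\phi^2$, and the quotient of two odd analytic functions whose denominator has a simple zero is even analytic). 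By the analytic inverse function theorem $\phi^2=\Phi^{-1}(u)$ is analytic near $u=1$, whence $f=\tilde P\circ\Phi^{-1}$ and $g=\tilde g\circ\Phi^{-1}$ extend analytically across $u=1$; the point $u=-1$ (i.e. $\theta=-\pi/2$) is handled identically, using that $\sigma$ is, modulo $2\pi$, also the reflection about $-\pi/2$. This completes~(i).

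Finally, I would deduce (ii) from (i) rather than repeat the argument: applying (i) to the shifted function $\theta\mapsto F(\theta-\pi/2)$ and using $\sin(\theta+\pi/2)=\cos\theta$ together with $\cos(\theta+\pi/2)=-\sin\theta$ yields~\eqref{r:cs22} with $\bar f=f$ and $\bar g=-g$, and transports the uniqueness in (i) to $\bar f,\bar g$ (equivalently, uniqueness in (ii) follows from the reflection $\theta\mapsto-\theta$, which fixes $\cos$ and flips $\sin$).
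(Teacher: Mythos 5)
Your argument for the statement as written is correct and its core coincides with the paper's: the paper proves item $(ii)$ by reflecting $\theta\mapsto-\theta$ (which fixes $\cos$ and flips $\sin$), solves for $\bar f(\cos\theta)=\tfrac12(F(\theta)+F(-\theta))$ and $\bar g(\cos\theta)=\tfrac12(F(\theta)-F(-\theta))/\sin\theta$, substitutes $u=\cos\theta$ on $(0,\pi)$, and observes analyticity on $(-1,1)$; you do exactly the same for item $(i)$ with the reflection $\theta\mapsto\pi-\theta$ and then transfer to $(ii)$ by the shift $\theta\mapsto\theta+\pi/2$, which is only a cosmetic difference. Where you genuinely depart from the paper is your third paragraph: the lemma only asserts analyticity on the open interval $(-1,1)$, so the endpoint analysis is not needed here, and the paper does not attempt it in this proof. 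Instead, the paper postpones the endpoints to Theorem~\ref{pr:fg}, where a completely different machine (Fourier--Chebyshev expansion, Bernstein's estimate on the coefficients, and Markov's inequality on $[1-\ep,1+\ep]$) yields only $f,g\in\mathcal{C}^{\infty}([-1,1])$. Your parity argument --- $P$ even and $Q$ odd in $\phi=\theta-\pi/2$, hence $P=\tilde P(\phi^2)$ and $Q/\cos\theta=\tilde g(\phi^2)$ with $\tilde P,\tilde g$ analytic, composed with the analytic inverse of $u=\cos\phi=\Phi(\phi^2)$ --- is correct and in fact gives real analyticity of $f$ and $g$ in a full neighborhood of $\pm1$, a strictly stronger conclusion than the paper's, obtained by elementary local arguments rather than series estimates. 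So your extra paragraph is not a flaw but a genuine improvement that would subsume Theorem~\ref{pr:fg}; the only thing to flag is that, for the lemma as stated, it is surplus to requirements.
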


\begin{proof} 	We will prove item $(ii)$. It is easy to see that both are equivalent, simply by changing $\theta$ by $\theta+\pi/2.$
	
Notice that evaluating \eqref{r:cs22} at $-\theta$ we get that
\[
F(-\theta)=\bar f\big(\cos (-\theta)\big)+\bar g\big(\cos (-\theta)\big)\sin (-\theta)=\bar f(\cos \theta)-\bar g(\cos\theta)\sin \theta.
\]	
Joining this equation and \eqref{r:cs22}  we obtain that
\[
\bar f(\cos \theta)=\frac{F(\theta)+F(-\theta)}2\quad\mbox{and}\quad \bar g(\cos \theta)=\frac{F(\theta)-F(-\theta)}{2\sin \theta}.
\] 	
For $\theta\in(0,\pi),$ $\cos \theta$ takes all values in $(-1,1)$ and doing the change of variables $u=\cos\theta$ we get that
\[
\bar f(u)=\frac{F(\arccos u)+F(-\arccos u)}2,\quad \bar g(u)=\frac{F(\arccos u)-F(-\arccos u)}{2\sqrt{1-u^2}}.
\] 
Clearly, both functions are analytic in $(-1,1).$	
\end{proof}

Next, we introduce some preliminary results that will be used to show that the functions $f,g,\bar f$ and $\bar g$ in the above decompositions are $\mathcal{C}^\infty([-1,1])$.

Recall that  the  Chebyshev polynomials of first and second kind, are respectively, $T_n$ and $U_n$ where
\begin{equation}\label{eq:che}
\cos (n\theta)=T_n(\cos\theta),\quad\quad \sin (n\theta)= U_{n-1} (\cos \theta) \sin \theta,
\end{equation}
and the polynomials $T_n$ and $U_n$ both satisfy the recurrence
\begin{equation}\label{eq:eed}
P_{n+1}(x)= 2 x P_n(x) - P_{n-1}(x),
\end{equation}
with initial conditions  $P_0=1$ and $P_1=x,$  (first kind and $T_n=P_n$) or
$P_0=1$ and $P_1=2x,$ (second kind and $U_n=P_n$).  Given any smooth function $F$, $F^{(k)}$ denotes its $k$-th derivative.

Given any polynomial $P,$  define $\|P\|:=\max_{x\in[-1,1]} |P(x)|.$  
Let $\mathcal{P}_n$ denote the set of polynomials of degree $\le n.$ Markov inequality for  polynomials says that for any $0<k\in\N,$
\begin{equation}\label{eq:mar}
	\|P^{(k)}\|\le \|T^{(k)}_n\|\,\|P\|,\quad P\in\mathcal{P}_n
	\end{equation}
and 
\[
	\|T^{(k)}_n\|=T^{(k)}_n(1)=\frac{n^2(n^2-1^2)(n^2-2^2)\cdots(n^2-(k-1)^2)}{1\cdot 3 \cdots (2k-1)}.
	\]
It was proved by Andrei Markov in 1889 for $k=1$ ($\|P^{\prime}\|\le n^2\,\|P\|$) and extended to any $k\ge1$ by his kid brother Vladimir Markov in 1892, see \cite{Sha}.

We will use that
\begin{equation}\label{eq:in1}	
\|T^{(k)}_n\|\le \frac{n^{2k}}{(2k-1)!!}
	\end{equation}
and, moroever, since $U_{n-1}= T_n^\prime /n$
\begin{equation*}
	\|U^{(k)}_{n-1}\|=\frac1n\|T^{(k+1)}_n\|\le \frac{n^{2k+1}}{(2k+1)!!}.
	\end{equation*}
	
We will also need the following version of Bernstein theorem for analytic $2\pi$-periodic functions, see Exercise 4 in \cite[Sec. 4]{Ka}.

\begin{theorem}\label{te:ana} Let $F$ be a $2\pi$-periodic analytic function. Then there exist two positive constants $K$ and $a$ such that its $n$-th Fourier coefficients $F_n, n\in\Z,$ satisfies
	\[
	|F_n|\le K {\rm e}^{-a |n|}.
	\]	  
\end{theorem}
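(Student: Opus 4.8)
The plan is to pass from the real to the complex domain and exploit the holomorphic extension of $F$, estimating the Fourier integral by shifting its contour off the real axis. Recall that
\[
F_n=\frac1{2\pi}\int_0^{2\pi}F(\theta)\,{\rm e}^{-{\rm i}n\theta}\,{\rm d}\theta.
\]
Because $F$ is real-analytic and $2\pi$-periodic it extends to a holomorphic function, still denoted $F$, on a horizontal strip $S_\delta=\{z\in\C:|\operatorname{Im}z|<\delta\}$ for some $\delta>0$, and this extension inherits the periodicity $F(z+2\pi)=F(z)$. Fix any $0<b<\delta$; by periodicity together with compactness $F$ is bounded on the closed sub-strip $|\operatorname{Im}z|\le b$, say $|F(z)|\le M$ there.

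For $n>0$ I would apply Cauchy's theorem to $F(z)\,{\rm e}^{-{\rm i}nz}$ on the rectangle with vertices $0,\,2\pi,\,2\pi-{\rm i}b,\,-{\rm i}b$. The contributions of the two vertical sides cancel: on them $z$ differs by $2\pi$, and both $F(z)$ and ${\rm e}^{-{\rm i}nz}$ are $2\pi$-periodic (the latter since $n\in\Z$), so the integrand takes equal values while the sides are traversed in opposite directions. Hence the integral over $[0,2\pi]$ equals the integral over the shifted segment $\operatorname{Im}z=-b$, giving
\[
F_n=\frac1{2\pi}\int_0^{2\pi}F(\theta-{\rm i}b)\,{\rm e}^{-{\rm i}n(\theta-{\rm i}b)}\,{\rm d}\theta.
\]
Since $|{\rm e}^{-{\rm i}n(\theta-{\rm i}b)}|={\rm e}^{-nb}$, this yields $|F_n|\le M\,{\rm e}^{-nb}$. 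For $n<0$ the same argument, shifting instead to $\operatorname{Im}z=+b$, gives $|F_n|\le M\,{\rm e}^{-|n|b}$, while $n=0$ is trivial. Taking $K=M$ and $a=b$ then finishes the proof.

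The only delicate point is the first paragraph, namely producing a \emph{uniform} strip of holomorphy from the merely pointwise real-analyticity of $F$. At each $\theta_0$ the Taylor series of $F$ converges on some disk; the local holomorphic extensions agree on overlaps by the identity theorem and therefore patch into a single holomorphic function on a neighborhood of $\R$, and by compactness of one period this neighborhood contains a full strip $S_\delta$. Periodicity transfers to the complex domain because $F(z+2\pi)-F(z)$ is holomorphic on $S_\delta$ and vanishes on $\R$, hence vanishes identically. I expect this extension-and-periodicity step to be the main (though standard) obstacle; once it is in place, the contour shift is a routine application of Cauchy's theorem.
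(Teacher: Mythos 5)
Your proof is correct. Note, however, that the paper does not actually prove Theorem~\ref{te:ana}: it is quoted as a known version of Bernstein's theorem with a pointer to Exercise~4 in \cite[Sec.~4]{Ka}, so there is no internal proof to compare against. Your contour-shift argument is precisely the standard proof that the cited exercise has in mind: extend $F$ holomorphically to a strip $\{|\operatorname{Im} z|<\delta\}$, use periodicity to get a uniform bound $M$ on a closed sub-strip, and move the integration segment to $\operatorname{Im} z=\mp b$ to gain the factor ${\rm e}^{-|n|b}$; the computation $\bigl|{\rm e}^{-{\rm i}n(\theta\mp{\rm i}b)}\bigr|={\rm e}^{-|n|b}$ and the cancellation of the vertical sides are both right, and you correctly shift downward for $n>0$ and upward for $n<0$. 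You also correctly identify the one genuinely delicate point, the uniform strip of holomorphy: your patching argument works, though to invoke the identity theorem cleanly you should observe that the overlap of two disks centered on $\R$ is convex, hence connected, and contains a real interval on which the two local extensions agree, so they agree on the whole overlap; the uniform $\delta$ then comes from covering one period $[0,2\pi]$ by finitely many such disks. With that small precision your argument is complete and self-contained, which is arguably a service the paper's bare citation does not provide.
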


Next proposition is our  last preliminary result.  Its proof is a direct consequence of the dominated convergence theorem.

\begin{prop}\label{pr:dct}
Fix $k\in\N$ and let $\mathcal{U}\subset\R$ be an open interval. Assume that for all $n\in\N\cup\{0\}$ the functions $G_n:\mathcal{U}\to\R$ are $\mathcal{C}^k(\mathcal{U}),$ and for each $x_0\in\mathcal{U}$ there is an open subset of $\mathcal U,$  $\mathcal{V}_{x_0,k}\ni x_0,$ such that
\[
 \big|G^{(k)}_n(x)\big|_{\{x\in \mathcal{V}_{x_0,k}\}}\le M_{n,k}\quad \mbox{and}\quad \sum_{n=0}^\infty M_{n,k}<\infty. 
\]	
Then, if  $G(x)=\sum_{n=0}^\infty G_n(x),$ it holds that  $G\in\mathcal{C}^k(\mathcal{U})$ and
$G^{(k)}(x)=\sum_{n=0}^\infty G_n^{(k)}(x).$	
\end{prop}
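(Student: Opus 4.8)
The plan is to localize the problem and then iterate a single term-by-term integration step, the one genuinely delicate part being to first secure the convergence of all the \emph{intermediate} derivative series. Since membership in $\mathcal{C}^k$ is a local property, I would fix $x_0\in\mathcal{U}$ and work on $\mathcal{V}:=\mathcal{V}_{x_0,k}$, which I may shrink to a bounded open interval. On $\mathcal{V}$ the bound $|G_n^{(k)}|\le M_{n,k}$ together with $\sum_n M_{n,k}<\infty$ gives, by the Weierstrass $M$-test, that $\sum_n G_n^{(k)}$ converges absolutely and uniformly on $\mathcal{V}$ to a continuous function which I call $H_k$.

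The delicate point, and the one I expect to be the main obstacle, is that the hypotheses only control the \emph{top-order} derivatives, so the convergence of the lower-order series $\sum_n G_n^{(j)}$ for $0\le j\le k-1$ is not granted a priori and must be extracted from the mere pointwise convergence of $G=\sum_n G_n$. To this end I would write Taylor's formula with integral remainder for each $G_n$ about $x_0$,
\[
G_n(x)=\sum_{j=0}^{k-1}\frac{G_n^{(j)}(x_0)}{j!}(x-x_0)^j+\underbrace{\frac{1}{(k-1)!}\int_{x_0}^x (x-t)^{k-1}G_n^{(k)}(t)\,{\rm d}t}_{=:\,\rho_n(x)},
\]
and note that $|\rho_n(x)|\le C\,M_{n,k}$ on $\mathcal{V}$, with $C=(\operatorname{diam}\mathcal{V})^k/k!$, so that $\sum_n\rho_n$ converges uniformly. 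Since $\sum_n G_n$ converges pointwise, the series of polynomial parts $\sum_n P_n$, with $P_n(x):=G_n(x)-\rho_n(x)$ of degree $\le k-1$, then converges at every $x\in\mathcal{V}$. Evaluating at $k$ distinct points of $\mathcal{V}$ and inverting the resulting Vandermonde system expresses each coefficient $G_n^{(j)}(x_0)$, $0\le j\le k-1$, as a fixed finite linear combination of the values $G_n(x_i)$ and of the remainders $\rho_n(x_i)$; as $\sum_n G_n(x_i)$ converges for each of these finitely many points and the remainders are summable, I conclude that $\sum_n G_n^{(j)}(x_0)$ converges for every $j<k$.

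With this in hand the rest is the dominated convergence argument that the statement already advertises. Knowing that $\sum_n G_n^{(k-1)}(x_0)$ converges and that $\sum_n G_n^{(k)}$ converges uniformly, I would sum the identity $G_n^{(k-1)}(x)=G_n^{(k-1)}(x_0)+\int_{x_0}^x G_n^{(k)}(t)\,{\rm d}t$ term by term; the interchange of $\sum_n$ and $\int_{x_0}^x$ is precisely the dominated convergence theorem for the counting measure, with dominating sequence $M_{n,k}$ (the integrand over $[x_0,x]$ being bounded by $M_{n,k}$). This yields that $H_{k-1}:=\sum_n G_n^{(k-1)}$ converges uniformly on $\mathcal{V}$ to a $\mathcal{C}^1$ function with $H_{k-1}'=H_k$. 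Iterating the same step downwards from $j=k-1$ to $j=0$ produces continuous functions $H_0,\dots,H_k$ with $H_j'=H_{j+1}$ and $H_0=\sum_n G_n=G$.

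Finally I would read off the conclusion: the chain $H_0'=H_1,\dots,H_{k-1}'=H_k$ shows that $G=H_0$ is $k$ times differentiable with $G^{(j)}=H_j$, and since $G^{(k)}=H_k$ is continuous, $G\in\mathcal{C}^k(\mathcal{V})$ with $G^{(k)}=\sum_n G_n^{(k)}$. As $x_0\in\mathcal{U}$ was arbitrary and all assertions are local, the conclusion holds on all of $\mathcal{U}$. I emphasize that the only step requiring real care is the bootstrap of the intermediate series in the second paragraph; in the applications one typically has the analogous summable bound at every order $\le k$ (for instance through exponentially decaying Fourier coefficients, as provided by Theorem~\ref{te:ana}), in which case that bootstrap is unnecessary and the result is, as claimed, the direct consequence of dominated convergence applied order by order.
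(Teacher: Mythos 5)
Your proof is correct, and it is actually more complete than the one in the paper. The paper proves the case $k=1$ by applying the mean value theorem to the difference quotients of the partial sums and then invoking dominated convergence (for the counting measure) twice, once for differentiability and once for continuity of $G'$; for $k>1$ it simply says the result ``follows by applying the same proof to $G_n^{(k-1)}$.'' That one-line reduction tacitly assumes that the intermediate series $\sum_n G_n^{(j)}$, $0<j<k$, converge, which is not among the hypotheses. This is exactly the point you isolate as the delicate one, and your bootstrap --- Taylor's formula with integral remainder to split $G_n$ into a degree-$(k-1)$ polynomial plus a remainder dominated by $M_{n,k}$, followed by evaluation at $k$ points and inversion of the Vandermonde system to get convergence of $\sum_n G_n^{(j)}(x_0)$ for all $j<k$ --- supplies the missing ingredient. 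Your single differentiation step (term-by-term integration of $G_n^{(k-1)}(x)=G_n^{(k-1)}(x_0)+\int_{x_0}^x G_n^{(k)}$) is an equally valid replacement for the paper's MVT-plus-DCT argument. One small point of precision: at the levels $j<k-1$ of your downward iteration the dominating sequence $M_{n,k}$ no longer bounds the integrands, so the interchange of sum and integral there should be justified by the uniform convergence of $\sum_n G_n^{(j+1)}$ established at the previous step (which suffices on a bounded interval), rather than by dominated convergence with $M_{n,k}$; this is cosmetic and does not affect the argument. It is also fair to note, as you do, that in the paper's actual application the summable bounds hold at every order, so the bootstrap is not needed there.
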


\begin{proof}
	We will prove the result for $k=1.$ The result for $k>1$ simply follows by applying the same proof to $G_n^{(k-1)}.$  
	
	Consider $k=1$. Let us prove first that $G$ is derivable. By the mean value theorem, we have that for $|h|$ small enough
	\[
	\frac{G(x_0+h)-G(x_0)}{h}=\sum_{n=0}^\infty \frac{G_n(x_0+h)-G_n(x_0)}{h}=\sum_{n=0}^\infty G_n^{\prime}(s_{n}),
	\]
for some  $s_n\in(x-|h|,x+|h|)	\subset \mathcal{V}_{x_0,1}.$ Hence 
\[
\left|\frac{G(x_0+h)-G(x_0)}{h}\right|= \left|\sum_{n=0}^\infty G_n^{\prime}(s_{n})\right|\le\sum_{n=0}^\infty M_{n,1}
<\infty
\]
and  by the dominated convergence theorem,
\begin{align*}
G^\prime(x_0)&=\lim_{h\to 0} \frac{G(x_0+h)-G(x_0)}{h} =\lim_{h\to 0}\sum_{n=0}^\infty \frac{G_n(x_0+h)-G_n(x_0)}{h}\\&=\sum_{n=0}^\infty \lim_{h\to 0}\frac{G_n(x_0+h)-G_n(x_0)}{h}= \sum_{n=0}^\infty G_n^\prime(x_0).
\end{align*} 
That $G'$ is continuous is again a consequence of the dominated convergence theorem, because the fact that $|G_n^\prime(x)|\le M_{n,1}$ for all $x\in \mathcal{V}_{x_0},$  and $\sum_{n=0}^\infty M_{n,1}<\infty$ implies that
\[
\lim_{h\to 0}G^\prime(x_0+h)=\lim_{h\to 0} \sum_{n=0}^\infty G_n^\prime(x_0+h)= \sum_{n=0}^\infty \lim_{h\to 0} G_n^\prime(x_0+h)=\sum_{n=0}^\infty G_n^\prime(x_0)=G'(x_0).
\]
\end{proof}	
	
Next result gives the desired decomposition. 

\begin{theorem}\label{pr:fg}
	Let $F(\theta)$ be a real analytic $2\pi$-periodic function.  Then: 
	\begin{enumerate}[(i)] 
		\item There exist two unique   $\mathcal{C}^{\infty}([-1,1])$ functions $f$ and $g$  such that  
		\begin{equation*}
		F(\theta)= f(\sin \theta)+g(\sin\theta)\cos \theta.
		\end{equation*}
		
		\item There exist two unique   $\mathcal{C}^{\infty}([-1,1])$ functions $\bar f$ and $\bar g$  such that  
		\begin{equation}\label{r:cs2}
		F(\theta)= \bar f(\cos \theta)+\bar g(\cos\theta)\sin \theta.
		\end{equation}
		
	\end{enumerate}
\end{theorem}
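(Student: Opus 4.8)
The plan is to reduce everything to item (ii), since item (i) is obtained from it by the substitution $\theta\mapsto\theta+\pi/2$ already used in Lemma~\ref{lem:ana}. That lemma gives unique analytic functions $\bar f,\bar g$ on the \emph{open} interval $(-1,1)$ satisfying \eqref{r:cs2}, so the only genuinely new content is to prove that $\bar f$ and $\bar g$ extend as $\mathcal{C}^{\infty}$ functions across the endpoints $u=\pm1$, i.e. to an open neighbourhood of $[-1,1]$. Uniqueness on $[-1,1]$ will then be inherited, by continuity, from the uniqueness on $(-1,1)$ already established.

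To produce explicit candidates for the extensions I would start from the Fourier expansion $F(\theta)=\tfrac{a_0}{2}+\sum_{n\ge1}\bigl(a_n\cos(n\theta)+b_n\sin(n\theta)\bigr)$ and substitute the Chebyshev identities \eqref{eq:che}, which turn $\cos(n\theta)$ into $T_n(\cos\theta)$ and $\sin(n\theta)$ into $U_{n-1}(\cos\theta)\sin\theta$. This yields, at least formally, $F(\theta)=P(\cos\theta)+Q(\cos\theta)\sin\theta$ with $P(u):=\tfrac{a_0}{2}+\sum_{n\ge1}a_nT_n(u)$ and $Q(u):=\sum_{n\ge1}b_nU_{n-1}(u)$. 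The crucial feature of this representation is that it no longer contains the factor $1/\sqrt{1-u^2}$ that appears in the explicit formula for $\bar g$ in Lemma~\ref{lem:ana}, so it is the natural object on which to read off smoothness up to the endpoints.

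The core of the argument is to show that $P$, $Q$ and all their formally differentiated series converge uniformly on an open neighbourhood of $[-1,1]$, so that Proposition~\ref{pr:dct} applies. By Theorem~\ref{te:ana} the coefficients satisfy $|a_n|,|b_n|\le 2K\mathrm{e}^{-an}$ for some $K,a>0$. On $[-1,1]$ the bound \eqref{eq:in1} (a consequence of Markov's inequality \eqref{eq:mar}) gives $\|a_nT_n^{(k)}\|\le 2K\mathrm{e}^{-an}\,n^{2k}/(2k-1)!!$, and similarly for $U_{n-1}$, which already forces $\mathcal{C}^{\infty}$ smoothness on the closed interval. To upgrade this to an open neighbourhood $(-1-\delta,1+\delta)$ I would estimate $T_n^{(k)}$ slightly outside $[-1,1]$ via $T_n(\cosh t)=\cosh(nt)$ together with the monotonicity of the derivatives of $T_n$ on $[1,\infty)$ (and the symmetric estimate near $-1$); this produces $|T_n^{(k)}(u)|\le C_k\mathrm{e}^{\alpha n}$ on $[1,1+\delta]$ with $\alpha=\alpha(\delta)\to0$ as $\delta\to0$. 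Choosing $\delta$ small enough that $\alpha<a$, the resulting dominating constants $M_{n,k}$ are summable, and Proposition~\ref{pr:dct} yields $P,Q\in\mathcal{C}^{\infty}\bigl((-1-\delta,1+\delta)\bigr)$ with legitimate term-by-term differentiation.

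Finally, on $(-1,1)$ the functions $P$ and $Q$ realize the decomposition \eqref{r:cs2}, so the uniqueness in Lemma~\ref{lem:ana} identifies them with $\bar f$ and $\bar g$; hence $\bar f,\bar g$ extend $\mathcal{C}^{\infty}$ to a neighbourhood of $[-1,1]$, which is exactly $\bar f,\bar g\in\mathcal{C}^{\infty}([-1,1])$, and uniqueness among such functions again follows by continuity from their uniqueness on $(-1,1)$. The step I expect to be the main obstacle is precisely the passage from the closed interval to an open neighbourhood: controlling the growth of $T_n^{(k)}$ (and $U_{n-1}^{(k)}$) just outside $[-1,1]$ and calibrating the width $\delta$ against the analyticity width $a$, whereas smoothness on $[-1,1]$ itself is immediate from \eqref{eq:in1} and Proposition~\ref{pr:dct}.
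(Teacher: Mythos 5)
Your proposal follows essentially the same route as the paper: reduce to item (ii), expand $F$ in Fourier series, convert via the Chebyshev identities \eqref{eq:che} to series in $T_n$ and $U_{n-1}$, use Theorem~\ref{te:ana} for the exponential decay of the coefficients, bound the differentiated series via \eqref{eq:in1} on $[-1,1]$, and then calibrate an exponential bound $\mathrm{e}^{\alpha(\delta)n}$ for $T_n^{(k)}$ on $[1,1+\delta]$ against the decay rate $a$ before invoking Proposition~\ref{pr:dct}. The only (minor) divergence is the device for the endpoint estimate --- you propose $T_n(\cosh t)=\cosh(nt)$ plus monotonicity of the derivatives on $[1,\infty)$, whereas the paper applies Markov's inequality \eqref{eq:mar} to the rescaled polynomial $\ep^{-k}T_n(\ep u+1)$ together with the explicit value of $T_n(1+\ep)$; both yield the required bound, though you leave the higher-derivative estimate at $1+\delta$ as a sketch where the paper carries it out in full.
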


\begin{proof} 	As in the proof of Lemma~\ref{lem:ana} it suffices to prove item $(ii)$. The uniqueness is a consequence of this lemma.

Let us prove~\eqref{r:cs2}. We start decomposing the Fourier series of $F$ as follows 
\begin{align*}
F(\theta)&= \sum_{n=0}^\infty a_n \cos( n\theta)+ \sum_{n=0}^\infty b_n \sin( n\theta)\\&= 
\sum_{n=0}^\infty a_n T_n(\cos \theta)+ \sum_{n=1}^\infty b_n U_{n-1}(\cos \theta)\sin \theta\\&=
\sum_{n=0}^\infty a_n T_n(\cos \theta)+ \sin \theta \sum_{n=1}^\infty b_n  U_{n-1}(\cos \theta),
\end{align*}
where, we have used~\eqref{eq:che}, and since $F$ is analytic, by Theorem \ref{te:ana},
\begin{equation}\label{eq:in3}
|a_n|\le M {\rm e}^{-a n},\quad |b_n|\le M {\rm e}^{-a n} ,
\end{equation}
for some $M>0$ and $a>0.$	
Hence, we introduce  the functions
\[
\bar f(u):= \sum_{n=0}^\infty a_n T_n(u)\quad\mbox{and}\quad \bar g(u):=\sum_{n=1}^\infty b_n U_{n-1}(u),
\]
and we prove first that they are of class $\mathcal{C}^{\infty}(-1,1).$ 	Consider for instance $\bar f.$   Although we already know  that $\bar f$ is analytic in $(-1,1),$ see Lemma~\ref{lem:ana}, we include a different proof that it is $\mathcal{C}^{\infty}(-1,1)$ because it gives the idea of how to proceed at $-1$ and $1.$ By joining inequalities ~\eqref{eq:in1} and~\eqref{eq:in3} it holds that for $k\in\N$ 
\begin{align*}
\Big\| \sum_{n=0}^\infty a_n T_n^{(k)}(u) \Big\|&\le \sum_{n=0}^\infty \big\|a_n T_n^{(k)}(u) \big\|\le  \sum_{n=0}^\infty M {\rm e}^{-a n} \frac{n^{2k}}{(2k-1)!!} \\&\le \frac {M}{(2k-1)!!}\sum_{n=0}^\infty \frac  {n^{2k}}{{\rm e}^{a n}}<\infty.
\end{align*}	
By applying Proposition~\ref{pr:dct} to $G_n=a_nT_n$ for all $k\in\N,$ we get that  $\bar f\in\mathcal{C}^{\infty}(-1,1).$ 

In order to apply Proposition~\ref{pr:dct} to prove that $\bar f$ is  $\mathcal{C}^{\infty},$ at the points $\pm 1$ we need to bound all the derivatives of $T_n$ on a fixed neighborhood of each of these points. We consider, for instance $u=1$ and define $J_\ep:=[1-\ep,1+\ep] ,$ for $\ep>0$ to be fixed. We will bound $\max_{y\in J_\ep} | T_n^{(k)}(y)|.$

With this goal, we introduce $Q_{n,\ep,k}(u):= {\ep^{-k}} T_n(\ep u+1).$ It is clear that
\[
 Q_{n,\ep,k}^{(k)}(u)= T_n^{(k)}(\ep u+1) 
\]
and
\[
\max_{y\in J_\ep} | T_n^{(k)}(y)|=\max_{u\in[-1,1]} | T_n^{(k)}(\ep u +1)|= \| T_n^{(k)}(\ep u +1)\|= \|  Q_{n,\ep,k}^{(k)}\|.
\]
By applying Markov inequality \eqref{eq:mar} to $Q_{n,\ep,k}$ we get that for any $k\in\N,$
\[
	\|Q_{n,\ep,k}^{(k)}\|\le \|T^{(k)}_n\|\,\|Q_{n,\ep,k}\|,
\]
or equivalently,
\begin{equation}\label{eq:max0}
\max_{y\in J_\ep} | T_n^{(k)}(y)|\le \frac{\|T^{(k)}_n\|}{\ep^k}\,\|T_{n}(\ep u +1)\|.
\end{equation}
Since $\|T_n\|=1,$  $T_n'(1)=n^2$ and all roots of $T_n$ are real and contained in $(-1,1)$ it holds that $|T_n^\prime(u)|_{u\ge1}>0$ and as a consequence
\begin{equation}\label{eq:max}
\|T_{n}(\ep u +1)\|=\max_{y\in J_\ep} |T_n(y)|=T_n(1+\ep).
\end{equation}
Let us compute this last expression. If $t_n:=t_n(\ep)=T_n(1+\ep),$ by \eqref{eq:eed}  it holds that
\[
t_{n+1}= 2 (1+\ep) t_n - t_{n-1},\quad t_0=1\quad\mbox{and}\quad t_1=1+\ep.
\]
 Either by solving the above second order linear difference equation with constant coefficients, or by using the expression of the Chebyshev polynomials in terms of square roots,  we get that
\[
t_n=T_n(1+\ep)=\frac12\left(1+\ep +\sqrt{\ep(2+\ep)}\right)^n+\frac12\left(1+\ep-\sqrt{\ep(2+\ep)}\right)^n,
\]
and as a consequence,
\[
|T_n(1+\ep)|=|t_n|\le \big|1+\ep +\sqrt{\ep(2+\ep)}\big|^n.
\]
By using the above inequality,  \eqref{eq:in3}, \eqref{eq:max0} and \eqref{eq:max} we obtain that 
\begin{equation*}
\max_{y\in J_\ep} | T_n^{(k)}(y)|\le \frac{n^{2k}}{\ep^k(2k-1)!!} \big|1+\ep +\sqrt{\ep(2+\ep)}\big|^n.
\end{equation*}
Taking $y\in J_\ep,$ by using the above inequality and \eqref{eq:in3} we get that
\begin{align*}
	\max_{y\in J_\ep}\Big| \sum_{n=0}^\infty a_n T_n^{(k)}(y) \Big|&\le \sum_{n=0}^\infty 	\max_{y\in J_\ep} \big|a_n T_n^{(k)}(y) \big|\\&\le  \sum_{n=0}^\infty M {\rm e}^{-a n} \frac{n^{2k}}{\ep^k(2k-1)!!} \big|1+\ep +\sqrt{\ep(2+\ep)}\big|^n\\&\le \frac {M}{\ep^k(2k-1)!!}\sum_{n=0}^\infty n^{2k} \big|{\rm e}^{-a}(1+\ep +\sqrt{\ep(2+\ep)})\big|^n  , 
\end{align*}
which is a convergent series if $\ep>0$ is so small that $ {\rm e}^{-a}(1+\ep +\sqrt{\ep(2+\ep)})<1.$	
Again, as a consequence of Proposition~\ref{pr:dct} applied to $G_n=a_nT_n$ and all $k\in\N,$ we get that  $\bar f$ is $\mathcal{C}^{\infty}$ at $u=1.$  The proof at $u=-1$ is analogous.  The proof for $\bar g$ also follows the same steps.
\end{proof}

A consequence of Theorem~\ref{pr:fg} is:

\begin{cor}\label{co:fg} Any star-like limit cycle $r=F(\theta)$ of \eqref{sis:pla} can be uniquely written as
	\[
	r=F(\theta)=f(\sin \theta)+g(\sin\theta)\cos \theta,
	\]	
	where $f$ and $g$ are $\mathcal{C}^\infty ([-1,1]).$
\end{cor}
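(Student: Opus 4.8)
The plan is to reduce Corollary~\ref{co:fg} almost immediately to Theorem~\ref{pr:fg}, the only nontrivial point being the passage from the analyticity of a star-like limit cycle to the analyticity of the associated $2\pi$-periodic function $F$. First I would recall that, by definition, a star-like periodic orbit of \eqref{sis:pla} is one that does not intersect the set $\mathcal T$ where $T$ vanishes, and hence corresponds to a smooth $2\pi$-periodic solution $r=F(\theta)$ of the non-autonomous equation \eqref{ecpol}. The key observation is that on the region $\R^2\setminus\mathcal T$ the right-hand side of \eqref{ecpol},
\[
\frac{R(r\cos\theta,r\sin\theta)}{T(r\cos\theta,r\sin\theta)},
\]
is a real-analytic function of $(r,\theta)$, being a quotient of trigonometric polynomials in $\theta$ with coefficients polynomial in $r$, whose denominator is nonvanishing along the orbit.

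Next I would invoke the classical Cauchy--Kovalevskaya type result for analytic ordinary differential equations: a solution of $r'=\Phi(\theta,r)$ with $\Phi$ real-analytic in both variables is itself real-analytic in $\theta$ on its interval of definition. Since the orbit is periodic and stays in $\R^2\setminus\mathcal T$, where $\Phi$ is analytic, the function $F$ is a real-analytic $2\pi$-periodic function. This is the one place where one must be slightly careful, and I expect the main obstacle to be precisely the justification that $F$ is genuinely \emph{analytic} (and not merely $\mathcal C^\infty$) and that the denominator $T$ does not vanish along the cycle; both follow from the very definition of star-like, but they should be stated explicitly rather than taken for granted.

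Once $F$ is known to be real-analytic and $2\pi$-periodic, the conclusion is immediate: I would apply Theorem~\ref{pr:fg}$(i)$ directly to $F$, which yields unique $\mathcal C^\infty([-1,1])$ functions $f$ and $g$ with
\[
r=F(\theta)=f(\sin\theta)+g(\sin\theta)\cos\theta,
\]
and this is exactly the assertion of the corollary, including the uniqueness statement. The proof is therefore a short chain: star-like $\Rightarrow$ the orbit avoids $\mathcal T$ $\Rightarrow$ $F$ solves an analytic ODE with nonvanishing denominator $\Rightarrow$ $F$ is analytic and $2\pi$-periodic $\Rightarrow$ Theorem~\ref{pr:fg} applies. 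No new estimates are needed; the entire content has already been carried out in Theorem~\ref{pr:fg}, and the corollary merely records that star-like limit cycles fall within its hypotheses.
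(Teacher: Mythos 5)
Your proposal is correct and follows essentially the same route as the paper: the authors likewise observe that a star-like limit cycle avoids $\mathcal T$, so $F$ is an analytic $2\pi$-periodic solution of \eqref{ecpol}, and then apply Theorem~\ref{pr:fg}. Your extra care in justifying the analyticity of $F$ via the analytic dependence of solutions of analytic ODEs only makes explicit what the paper states in one line.
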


\begin{proof}
Since star-like limit cycles are $r=F(\theta)$ where this function is a solution of~\eqref{ecpol} and in a neighborhood of it the denominator of this equation does not vanish, $F$ is a $2\pi$-periodic  analytic function. Then the result follows by using  Theorem~\ref{pr:fg}. 
\end{proof}

Our first characterization of the periodic orbits of~\eqref{sis:pla} as  solutions of a planar non-autonomous system is given in next theorem.

\begin{theorem}\label{teo:sisf'g'}
	Let $r=f(\sin \theta)+g(\sin \theta)\cos \theta$ be a star-like periodic orbit of \eqref{ecpol}
	where
	$f,\,g\in\mathcal{C}^\infty([-1,1])$. Then $f(u)$ and $g(u)$ are solutions of 
	\begin{equation}\label{sis:f'g'}
	\begin{cases}
	(1-u^2)T_1f^\prime(u)+(1-u^2)T_0g^\prime(u)-uT_0g(u)-R_0=0,\\
	T_0f^\prime(u)+(1-u^2)T_1g^\prime(u)-uT_1g(u)-R_1=0,
	\end{cases}
	\end{equation}
	where  $T$ and $R$ are given in the polar expression~\eqref{ecpol} of~\eqref{sis:pla} and  $T_i=T_i(f(u),g(u),u)$ and $R_i=R_i(f(u),g(u),u)$, for
	$i=0,1,$ as in Lemma~\ref{le:des}.
\end{theorem}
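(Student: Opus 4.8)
The plan is to substitute the star-like orbit $r=F(\theta)=f(\sin\theta)+g(\sin\theta)\cos\theta$ into the polar equation~\eqref{ecpol}, written in the cleared form $T\,\mathrm{d}r/\mathrm{d}\theta=R$, and then to read off the two equations of~\eqref{sis:f'g'} by separating the resulting identity according to the decomposition of Lemma~\ref{le:des}. Throughout I write $u=\sin\theta$ and $v=\cos\theta$, so that $v^2=1-u^2$ and $r=f(u)+v\,g(u)$; this is exactly the parametrization $x=(f+vg)v$, $y=(f+vg)u$ appearing in that lemma.

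First I would compute the left-hand side. Since $\mathrm{d}u/\mathrm{d}\theta=v$ and $\mathrm{d}v/\mathrm{d}\theta=-u$, differentiating $r=f(u)+v\,g(u)$ along the orbit gives
\[
\frac{\mathrm{d}r}{\mathrm{d}\theta}=(1-u^2)\,g'(u)-u\,g(u)+v\,f'(u).
\]
Next, by Lemma~\ref{le:des} the polynomials $R$ and $T$ evaluated on the orbit decompose as $R=R_0+vR_1$ and $T=T_0+vT_1$, with $R_i,T_i$ functions of $(f(u),g(u),u)$. Multiplying $T\,\mathrm{d}r/\mathrm{d}\theta$ out and reducing every occurrence of $v^2$ to $1-u^2$, I would collect the result in the form $(\,\cdot\,)_0+v\,(\,\cdot\,)_1$; a direct expansion gives
\[
T\,\frac{\mathrm{d}r}{\mathrm{d}\theta}=\Big(T_0\big[(1-u^2)g'-ug\big]+(1-u^2)T_1 f'\Big)+v\Big(T_0 f'+T_1\big[(1-u^2)g'-ug\big]\Big).
\]
Equating this expression with $R_0+vR_1$ is what will produce~\eqref{sis:f'g'}.

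The key — and only delicate — step is to justify that the coefficients of $1$ and of $v$ may be matched separately, even though $v$ is not an independent variable but the function $v=\cos\theta=\pm\sqrt{1-u^2}$ of $u$. Here I would use that $F$ parametrizes a genuine $2\pi$-periodic star-like orbit: for each $u\in(-1,1)$ there are two angles, $\theta_1\in(-\pi/2,\pi/2)$ with $\cos\theta_1=+\sqrt{1-u^2}$ and $\theta_2=\pi-\theta_1$ with $\cos\theta_2=-\sqrt{1-u^2}$, both having $\sin\theta_i=u$, hence the same values $f(u),g(u)$. The identity $T\,\mathrm{d}r/\mathrm{d}\theta=R$ therefore holds both for $v=+\sqrt{1-u^2}$ and for $v=-\sqrt{1-u^2}$. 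Adding the two instances kills the part odd in $v$ and yields $(\,\cdot\,)_0=R_0$, while subtracting them and dividing by $v\neq0$ yields $(\,\cdot\,)_1=R_1$; the endpoints $u=\pm1$ are then covered by continuity.

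Writing these two equations out, namely
\[
(1-u^2)T_1 f'+(1-u^2)T_0 g'-uT_0 g-R_0=0\quad\text{and}\quad T_0 f'+(1-u^2)T_1 g'-uT_1 g-R_1=0,
\]
gives precisely the first and second lines of~\eqref{sis:f'g'}, which is the claim. I expect the separation via the two symmetric angles $\theta_1$ and $\theta_2=\pi-\theta_1$ to be the conceptual crux, since a single scalar identity in $u$ alone would not force the coefficients of $1$ and $v$ to vanish independently; the remaining manipulations are routine bookkeeping using $v^2=1-u^2$.
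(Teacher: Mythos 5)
Your proposal is correct and follows essentially the same route as the paper: substitute $r=f(u)+v\,g(u)$ into the polar equation, clear the denominator, reduce $v^2$ to $1-u^2$, and split the resulting identity into its $v$-even and $v$-odd parts to obtain the two equations of~\eqref{sis:f'g'}. The only difference is that where the paper justifies this splitting by citing the uniqueness of the decomposition (Theorem~\ref{pr:fg}), you re-derive that uniqueness on the spot via the $\theta\mapsto\pi-\theta$ symmetry, which is precisely how the paper itself proves uniqueness in Lemma~\ref{lem:ana}.
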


Notice that here, and from now on, $T_0$ and $T_1$ do not denote the first two  Chebyshev polynomials, but the polynomials associated to the decomposition of~$T.$

\begin{proof}[Proof of Theorem~\ref{teo:sisf'g'}] The right-hand side of the polar expression ~\eqref{ecpol} of~\eqref{sis:pla}, replacing
\begin{align*}
x&=r\cos\theta= \big(f(\sin \theta)+g(\sin\theta )\cos \theta\big)\cos \theta=(f(u)+g(u)v)v,\\
y&=r\sin\theta= \big(f(\sin \theta)+g(\sin\theta )\cos \theta\big)\sin \theta=(f(u)+g(u)v)u,
\end{align*}	
	 can be written as   
	$$
 \frac{R(r\cos\theta,r\sin\theta)}{T(r\cos\theta,r\sin\theta)}=\frac{R((f(u)+g(u)v)v,(f(u)+g(u)v)u)}{T((f(u)+g(u)v)v,(f(u)+g(u)v)u)}.
$$
Any star-like periodic  orbit of~\eqref{sis:pla} $r=f(\sin \theta)+g(\sin \theta)\cos \theta$ must satisfy
\begin{align*}
\frac{{\rm d}r}{{\rm d}\theta}&=f^{\prime}(\sin \theta)\cos\theta+g^{\prime}(\sin\theta)\cos^2\theta-g(\sin\theta)\sin\theta
\\&=f^\prime(u)v+g^\prime(u)v^2-g(u)u\\&=f^\prime(u)v+g^\prime(u)(1-u^2)-g(u)u.
\end{align*}
Hence
\[
f^\prime(u)v+g^\prime(u)(1-u^2)-g(u)u=\frac{R_0(f(u),g(u),u)+R_1(f(u),g(u),u)v}{T_0(f(u),g(u),u)+T_1(f(u),g(u),u)v}.
\]
Eliminating the denominator and  using once more that $v^2=1-u^2,$ we arrive to
 \begin{align*}
	(1-u^2)\big(f^\prime(u)T_1&+g^\prime(u)T_0\big)-ug(u)T_0
	-R_0+\\&\Big(T_0f^\prime(u)+\big((1-u^2)g^\prime(u)-ug(u)\big)T_1-R_1\Big)v=0,
	\end{align*}
where $T_i=T_i(f(u),g(u),u)$, $R_i=R_i(f(u),g(u),u)$, $i=0,1$.
From the above equality and by using the uniqueness of decomposition of \eqref{r:cs} proved in Theorem~\ref{pr:fg} both terms must be zero, giving rise to the system of two differential equations~\eqref{sis:f'g'}.	
\end{proof}

The following theorem gives a new and  geometric interpretation of the star-like periodic orbits of system~\eqref{sis:pla}.

\begin{theorem}\label{teo}
	Let $r=f(\sin \theta)+g(\sin \theta)\cos \theta$ be a star-like periodic orbit of \eqref{ecpol}
	where
	$f,\,g\in\mathcal{C}^\infty([-1,1])$.
	Then    $\gamma(s)=\big(f(u(s)),g(u(s)),u(s)\big)$ is a heteroclinic solution
 of the following $3d$-polynomial differential system 
	\begin{equation}\label{sis:3d}
	\begin{cases}
	\dfrac {{\rm d}f}{{\rm d}s}=(1-u^2)(R_1T_0-R_0T_1),\\[0.2cm]
	\dfrac {{\rm d}g}{{\rm d}s}=-(1-u^2)T_1(R_1+ugT_1)+T_0(R_0+ugT_0),\\[0.2cm]
	\dfrac {{\rm d}u}{{\rm d}s}=(1-u^2)(T_0^2-(1-u^2)T_1^2),
	\end{cases}
	\end{equation}
	 joining two critical points, each one of them in one of the two invariant planes $u=1$ and $u=-1,$ where
	 $T_i=T_i(f,g,u)$ and $R_i=R_i(f,g,u)$ are as in Theorem~\ref{teo:sisf'g'}.
	\end{theorem}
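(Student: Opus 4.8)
The plan is to read system~\eqref{sis:f'g'} of Theorem~\ref{teo:sisf'g'} as a linear system for the two unknowns $f'(u)$ and $g'(u)$, to solve it by Cramer's rule, and then to desingularize the resulting rational planar field by a change of the independent variable. Writing \eqref{sis:f'g'} in the form
\[
\begin{cases}
(1-u^2)T_1\, f' + (1-u^2)T_0\, g' = u T_0 g + R_0,\\
T_0\, f' + (1-u^2)T_1\, g' = u T_1 g + R_1,
\end{cases}
\]
its coefficient determinant is $\Delta = -(1-u^2)\Lambda$, where I abbreviate $\Lambda := T_0^2 - (1-u^2)T_1^2$. First I would check that Cramer's rule, after the cancellation of the $u g T_0 T_1$ terms, gives
\[
f'(u) = \frac{R_1 T_0 - R_0 T_1}{\Lambda}, \qquad
g'(u) = \frac{T_0 R_0 - (1-u^2)T_1 R_1 + u g\,\Lambda}{(1-u^2)\Lambda}.
\]

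Next I would introduce a new independent variable $s$ by imposing the third equation of \eqref{sis:3d}, namely $\tfrac{{\rm d}u}{{\rm d}s} = (1-u^2)\Lambda$. Then, along the curve, $\tfrac{{\rm d}f}{{\rm d}s} = f'(u)\tfrac{{\rm d}u}{{\rm d}s}$ and $\tfrac{{\rm d}g}{{\rm d}s} = g'(u)\tfrac{{\rm d}u}{{\rm d}s}$; substituting the two expressions above and clearing the denominators $\Lambda$ and $(1-u^2)\Lambda$ reproduces verbatim the first two polynomial equations of \eqref{sis:3d}. This shows that $\gamma(s) = (f(u(s)), g(u(s)), u(s))$ is tangent to the field \eqref{sis:3d}, hence one of its solutions, and the factor $(1-u^2)$ in the third component makes the planes $u = \pm 1$ invariant.

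To identify the endpoints of $\gamma$ as critical points I would use that $f, g \in \mathcal{C}^\infty([-1,1])$: since $f, g, f', g'$ and the polynomials $T_i, R_i$ all extend continuously to the closed interval, the identities \eqref{sis:f'g'} hold at $u = \pm 1$ as well. Evaluating the first of them at $u = 1$ (where $1 - u^2 = 0$) yields the constraint $R_0 + T_0 g = 0$ at $(f(1), g(1), 1)$, and at $u = -1$ it yields $R_0 - T_0 g = 0$ at $(f(-1), g(-1), -1)$. In \eqref{sis:3d} the first two components vanish at $u = \pm 1$ because of their explicit $(1-u^2)$ factors, while the $g$-component reduces there to $T_0(R_0 + u g T_0)$, which is exactly $T_0 \cdot 0$ by the constraints just obtained. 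Hence $(f(\pm 1), g(\pm 1), \pm 1)$ are singular points of \eqref{sis:3d}, each lying in one of the invariant planes.

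The remaining, and to my mind decisive, step is to prove that $\gamma$ is genuinely heteroclinic, i.e. that it connects these two singularities. Here I would exploit the star-like hypothesis through the factorization
\[
\Lambda = T_0^2 - (1-u^2)T_1^2 = \big(T_0 + \sqrt{1-u^2}\,T_1\big)\big(T_0 - \sqrt{1-u^2}\,T_1\big).
\]
For each fixed $u \in (-1,1)$ the two factors are the values of $T$ at the two points of the orbit with $\sin\theta = u$, that is $\cos\theta = \pm\sqrt{1-u^2}$. Because the orbit is star-like it avoids $\mathcal{T} = \{T = 0\}$, so both factors are nonzero, and as $T$ keeps a constant sign along the orbit they share it; therefore $\Lambda > 0$ on $(-1,1)$ and $\tfrac{{\rm d}u}{{\rm d}s} = (1-u^2)\Lambda > 0$ there, so $u$ is strictly increasing along the flow. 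Since $\Lambda \to T_0^2 > 0$ at each endpoint (again $T_0 \neq 0$, as it equals $T$ at the point where the orbit crosses the $y$-axis) while $1 - u^2 \to 0$, the elapsed time $s = \int {\rm d}u/\big((1-u^2)\Lambda\big)$ diverges at both ends, so $u(s) \to \pm 1$ only as $s \to \pm\infty$ and $\gamma(s) \to (f(\pm 1), g(\pm 1), \pm 1)$. Thus $\gamma$ joins the two critical points and is the desired heteroclinic solution. The main obstacle is precisely this sign and monotonicity analysis of $\Lambda$, where the star-like character of the orbit must be turned into the global dynamical picture; by contrast the elimination and the endpoint computations are routine.
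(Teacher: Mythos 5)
Your proposal is correct and follows essentially the same route as the paper: isolating $f'$ and $g'$ from \eqref{sis:f'g'} (the paper states the same two rational expressions you obtain by Cramer's rule), rescaling time to clear denominators, and using the star-like hypothesis to control $\Lambda=T_0^2-(1-u^2)T_1^2$ along the orbit. Your extra steps — verifying via the constraints $R_0\pm gT_0=0$ that the endpoints are genuine critical points, factoring $\Lambda$ as the product of the values of $T$ at the two orbit points with $\sin\theta=u$ to get $\Lambda>0$, and the divergence of $s$ at $u=\pm1$ — are correct refinements of details the paper leaves implicit in its one-line conclusion that the absence of critical points in the strip makes $\gamma$ heteroclinic.
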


\begin{proof} By isolating $f'$ and $g'$ from system~\eqref{sis:f'g'} of Theorem~\ref{teo:sisf'g'}, we get 
\begin{align*}
f^\prime(u)&=\frac{R_1T_0-R_0T_1}{T_0^2-(1-u^2)T_1^2},\\
g^\prime(u)&=\frac{-(1-u^2)T_1(R_1+ug(u)T_1)+T_0(R_0+ug(u)T_0)}{(1-u^2)(T_0^2-(1-u^2)T_1^2)}.
\end{align*}
After changing the time in such a way that the denominators are removed we can write the above system  as the
differential system of the statement. From Theorem~\ref{teo:sisf'g'} we also  know that $\gamma(s)$  is a solution of system~\eqref{sis:3d}.  Finally, since the  periodic orbit is star-like it holds that $T_0+vT_1$ (that is $\dot\theta$) does not vanish on it. Thus $T_0^2\ne v^2T_1^2=(1-u^2)T_1^2.$ This fact  implies that~\eqref{sis:3d} has no critical points in the strip $\{(f,g,u)\,:\, -1<u<1\}.$ As a consequence, $\gamma(s)$ is a heteroclinic orbit of~\eqref{sis:3d} with the behavior given in the statement.
\end{proof}

 The above theorem proves that star-like periodic orbits of  \eqref{ecpol} can also be seen as heteroclinic solutions of \eqref{sis:3d} connecting a critical point of the invariant plane $\{u=-1\}$ with a critical point  of the invariant plane $\{u=1\}.$ Notice that both planes contain continua of critical points of the form  $(f,g,\pm1)$ with $f$ and $g$ satisfying $T_0(R_0\pm g T_0)=0,$ with these functions evaluated at $(f,g,\pm1).$

\subsection{A couple of  examples.}

Consider the rigid  ($\dot \theta \equiv 1$) cubic system
\begin{equation}\label{eq:ex1}
\begin{cases}
\dot{x}=-y+x(3x^2+2xy+y^2-1),\\
\dot{y}=\,x+y(3x^2+2xy+y^2-1).
\end{cases}
\end{equation}
Our associated  3$d$-differential system~\eqref{sis:3d} is
\begin{align*}
	\dfrac{{\rm d}f}{{\rm d}s}&=(1-u^2)\left(2uf^3-g-(1-u^2)(2u^2-3)g^3+6u(1-u^2)fg^2-3(2u^2-3)f^2g\right),\\
		\dfrac{{\rm d}g}{{\rm d}s}&=-f-(2\,{u}^{2}-3)f^{3}+ug+(1-{u}^{2})\big(2\,u(1-{u}^{2})g^{2}-3\,(
	2\,{u}^{2}-3)fg+6\,uf^{2}\big)g,\\
		\dfrac{{\rm d}u}{{\rm d}s}&=1-{u}^{2}.
	\end{align*}
It has the invariant  curve
\[
1+(2u^2-3)f^2=0, \quad g=0,
\]
that gives rise to the  solution $f(u)=(3-2u^2)^{-1/2}$ and $g=0$ of the corresponding system~\eqref{sis:f'g'} and to the heteroclinic orbit   of the above $3d$-differential system
\[(f(s),g(s),u(s))= \Big(\big(3-2U^2(s)\big)^{-1/2},0,U(s)\Big),\quad\mbox{with}\quad U(s)={\rm tanh}\, (s),\] joining  $(1,0,-1)$ and  $(1,0,1).$ The corresponding  limit cycle of the original planar system, expressed in polar coordinates, is 
$r=F(\theta)=(3-2\sin^2\theta)^{-1/2}.$ 
In fact, system~\eqref{eq:ex1} 
in polar coordinates correspond to  the  Bernoulli equation 
\begin{equation*}
\frac{{\rm d}r}{{\rm d}\theta}=-r+\big(2\cos^2\theta+2\cos\theta\sin\theta+1\big)\,r^3
\end{equation*}
and all its solutions are
$r=0$ and  $ r=\pm (3-2\sin^2\theta+k{\rm e}^{2\theta})^{-1/2}.$


The next example 
\begin{equation}\label{eq:ex2}
\begin{cases}
\dot{x}=-y+x(x^2-y^2),\\
\dot{y}=x+y(x^2-y^2)\end{cases}
\end{equation}
is also a rigid cubic system, but  with a continuum of periodic orbits, because as we will see it has a center. It is one of the systems appearing in \cite{Alw2009}. 

Its  associated $3d$-differential system~\eqref{sis:3d} is
\begin{equation*}
\begin{cases}
\dfrac {{\rm d}f}{{\rm d}s}=(1-u^2)(1-2u^2)\big(3f^2+(1-u^2)g^2\big)g,\\[0.2cm]
\dfrac {{\rm d}g}{{\rm d}s}=ug+(1-2\,{u}^{2})\big(f^{2}+3(1-u^2)g^{2}\big)f,\\[0.2cm]
\dfrac {{\rm d}u}{{\rm d}s}=1-{u}^{2}.
\end{cases}
\end{equation*}

 Recall that given an $n$-dimensional polynomial vector field $Z$ it is said that a hypersurface $H=0$ is invariant if $Z(H)=KH$ for some polynomial $K,$ called the cofactor of $H,$ see for instance~\cite{LZ2012}.
  Clearly these hypersurfaces are invariant by the flow of $Z.$ It is not difficult to see that
$$H(f,g,u):=2fg-2u(f^2-(1-u^2)g^2)^2=0$$ is an invariant surface with cofactor $K(f,g,u)=u+8(1-u^2)(1-2u^2)fg$, see Figure~\ref{fig:1}. This surface is full of heteroclinic orbits connecting the planes $u=-1$ and $u=1$ that correspond to the periodic orbits of the center.

\begin{figure}[h]
	\includegraphics[scale=0.63,angle=0]{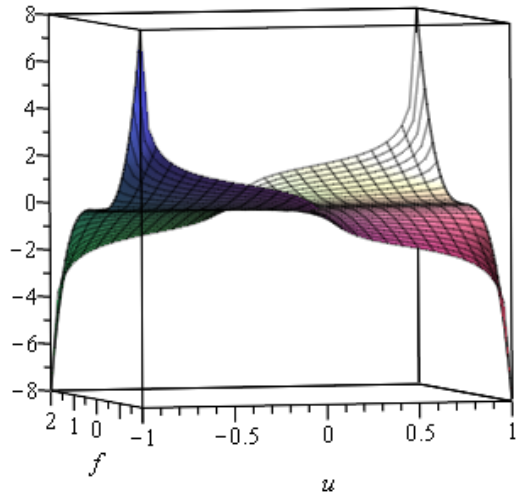}
\caption{Continua of heteroclinic solutions contained in $H(f,g,u)=0$.}\label{fig:1}
\end{figure}

Notice that system~\eqref{eq:ex2} has a center at the origin because it writes in polar coordinates as
\begin{equation*}
\frac{{\rm d}r}{{\rm d}\theta}=\big(\cos^2\theta-\sin^2\theta\big)\,r^3
\end{equation*}
and its solutions are $r=0$ and
$r(\theta)=\pm(k-\sin(2\theta))^{-1/2},$
and they are $2\pi$-periodic for $k>1.$

\section{Common solutions of two polynomial differential systems}\label{se:com}

In this section we describe a method in six steps to find conditions for two families of differential systems
\begin{align}\label{sp1}
\begin{array}{cc}
a)\, \begin{cases}
\dot{f}=P_1(f,u),\\
\dot{u}=Q_1(f,u),
\end{cases}
&\phantom{aaaaaaaaaaa}
b)\, \begin{cases}
\dot{f}=P_2(f,u),\\
\dot{u}=Q_2(f,u),
\end{cases}
\end{array}
\end{align}
where $P_i(f,u)$ and $Q_i(f,u)$ are polynomial with respect to $f$, $u$ and some parameters, for $i=1,2,$ to have a common solution.

\begin{itemize}
	\item[1.] Write systems (\ref{sp1}$a$) and (\ref{sp1}$b$) as the rational differential equations
	\begin{align}\label{sr1}
	a)\, f^{\prime}(u)=\frac{P_1(f,u)}{Q_1(f,u)},\qquad \mbox{and} \qquad b)\, f^{\prime}(u)=\frac{P_2(f,u)}{Q_2(f,u)}.
	\end{align}
	\item[2.] Consider the polynomial
	$$\mc{H}(f,u):= P_1(f,u)Q_2(f,u)-P_2(f,u)Q_1(f,u),$$
	given by 
	the
	numerator of $\frac{P_1(f,u)}{Q_1(f,u)}-\frac{P_2(f,u)}{Q_2(f,u)}$.
	\medskip
	\item[3.]  Calculate the derivative of $\mc{H}(f,u)$ with respect to $u$ where $f=f(u)$ satisfies either equation (\ref{sr1}$a$) or (\ref{sr1}$b$)
	$$\mc{J}_i(f,u):=\frac{\partial \mc{H}}{\partial{f}}(f,u)P_i(f,u)+\frac{\partial \mc{H}}{\partial{u}}(f,u)Q_i(f,u),$$
	where $i=1$ if $f(u)$ satisfies (\ref{sr1}$a$) and $i=2$ in the other case.
\item[4.] Compute the resultant between $\mc{H}(f,u)$ and $\mc{J}_i(f,u)$ with respect to $u$
$${\mc{G}}_i(f):=\Res_u\big(\mc{H}(f,u),\mc{J}_i(f,u)\big),$$
where ${\mc{G}}_i(f)$ is a polynomial that depends on $f$ and the parameters of the equations. 
We can also compute the resultant with respect to $f$ and, in this case, we  would obtain a polynomial that depends on $u$ and the parameters.
\item[5.] For some $i=1$ or $2$ solve the  algebraic system obtained by imposing that all the coefficients 
of~${\mc{G}_i}(f)$ vanish.
\medskip
\item[6.] A necessary condition for systems (\ref{sp1}$a$) and (\ref{sp1}$b$) to have a  common solution is that \[\Res_u(\mc{H}(f,u),\mc{J}_i(f,u))\equiv0\quad \mbox{and}\quad \Res_f(\mc{H}(f,u),\mc{J}_i(f,u))\equiv0.\]
\end{itemize}

 So, the solutions of the algebraic system of the  step 5 provide  the conditions on the parameters that can give rise to common solutions.

\smallskip

Let us illustrate our approach with an example. 
Consider 
\begin{align*}
\begin{array}{cc}
a)\, \begin{sistema}
\dot{f}=f^3+\lambda u-u^2+1,\\
\dot{u}=f^4-f u^2+3f^2+f,
\end{sistema}
&\phantom{aaaaaaaaaaa}
b)\, \begin{sistema}
\dot{f}=\rho f^4 u-f u^3 +f u+2u,\\
\dot{u}=f^3+3f^2-u^2+\sigma.
\end{sistema}
\end{array}
\end{align*}
We want to find conditions on the parameters $\lambda$, $\rho$ and $\sigma$ in order that the above systems can have a common solution.

Following the steps described above, we have the associated rational differential equations
\begin{align*}
f^{\prime}(u)=\frac{f^3+\lambda u-u^2+1}{f^4-f u^2+3f^2+f},\qquad f^{\prime}(u)=\frac{\rho f^4u-u^3 f+f u+2u}{f^3+3f^2-u^2+\sigma},
\end{align*}
respectively.
Then 
\begin{align*}
\mc{H}(f,u)&=-\rho f^{8}\,u+(1-3\,\rho\,u) f^{6}+(3-(\rho+1)u+
(\rho+1) u^{3}) f^{5}-2\,f^{4}u\\&
+(1+\sigma+(\lambda-3)u-2\,u^{2}+3\,u^{3}) f^{3}+(3+( 3\,\lambda-7) u-3\,u^{2}\\&
+2\,u^{3}-u^{5}) f^{2}-2u(1-u^{2})f +
 ( \sigma-{u}^{2}) ( \lambda\,u-{u}^{2}+1)
\end{align*} 
and
\begin{align*}
\mc{J}_2(f,u)&=-\rho(1+8\,\rho\, u^{2})f^{11}-3\,\rho f^{10}
-3\,\rho(6\,\rho\, u^{2}-2\,u+1)f^{9}+\big(\rho\,(5\,\rho+13)u^{4}\\&+(3-9\,\rho-5\,\rho^{2})u^{2}+\rho(15\,u-\sigma-10)-1)f^{8}-(5+3\,\rho-(9-15\,\rho)u^{2})f^{7}\\&
+(27\,\rho\,u^{4}-6(\rho+1)u^{3}+(3\,\rho\,\lambda-24\,\rho+9)u^{2}
+(3\,\rho\,\sigma+3\,\rho+2)u-3\,\rho\,\sigma\\&+\lambda-9)f^{6}+
(-(7\,\rho+5)u^{6}+(11\,\rho+2)u^{4}-(6\,\rho+15)u^{3}\\&
+(6\,\rho\,\lambda+3\,\rho\,\sigma-54\,\rho+3\,\sigma+29)u^{2}+(6\,\rho+9)u-\rho\,\sigma
+6\,\lambda-\sigma-16)f^{5}\\&+((12\,\rho+3)u^{4}-4(3\,\rho-2)u^{2}+12\,u+9\,\lambda-2\,\sigma-23)f^{4}
+(-9\,u^{6}+6\,u^{5}\\&+(9-3\,\lambda)u^{4}-(3\,\sigma+1)u^{3}+(9\,\sigma-\lambda-4)u^{2}
+(1-3\,\sigma)u+2\,\lambda\,\sigma\\&-3\,\sigma-6)f^{3}+(2\,u^{8}-u^{6}+6\,u^{5}-(6\,\lambda+5\,\sigma-30)u^{4}
-6\,u^{3}+(6\,\sigma-25)u^{2}\\&-6(\,\sigma-1)u+\sigma\,
(6\,\lambda-7))f^{2}+(-6\,u^{6}+6\,u^{4}-12\,u^{3}
+(12\,\lambda+6\,\sigma-28)u^{2}\\&+12\,u-2\,\sigma)f-4\,u^{5}+(3\,\lambda+4)u^{4}
+(6\,\sigma+2)u^{3}-(4\,\lambda\,\sigma+4)u^{2}\\&-2\,\sigma\,(\sigma+1)u+\lambda\,{\sigma}^{2}.
\end{align*}
Computing the resultant between $\mc{H}(f,u)$ and $\mc{J}_2(f,u)$ with respect to $u$ we obtain
$${\mc{G}_2}(f)=-f^2(f^3+3f^2+\sigma)(\rho f^4-f^4-3f^3-\sigma f+f+2)^2\mc{G}(f),$$
where $\mc{G}(f)$ is a polynomial of degree 64 in $f$  and whose coefficients depend on the parameters $\lambda$, $\rho$ and $\sigma$.

The algebraic system obtained  imposing  that these coefficients identically vanish is a simple over-determined system. Its unique solution is $\rho=1$, $\lambda=2$ and $\sigma=1$.  For these values of the parameters,
\[
\mc{H}(f,u)=(f^3-u^2+1)\big(-{f}^{5}u+{f}^{2}{u}^{3}-3\,{f}^{3}u+{f}^{3}-{f}^{2}u+3\,{f}^{2}-2\,uf
-{u}^{2}+2\,u+1
\big).
\]
 Each of the above two factors   gives a candidate to  be 
a common solution of both differential systems. Direct computations show that the only common solution is~$f^3-u^2+1=0$.

\section{Reversible limit cycles}\label{se:rlc}

\begin{proof}[Proof of Theorem \ref{te:stl}] By means of a rotation, if necessary, reversible star-like periodic orbits  are 
	invariant by the change  of
variables and time $(x,y,t)\rightarrow (-x,y,-t).$ This fact is equivalent to find solutions $r=F(\theta)=f(\sin \theta),$ that is with  $g=0$ in system~\eqref{sis:f'g'} in Theorem~\ref{teo:sisf'g'}. Both equations write as
\begin{equation}\label{eq:dues0}
(1-u^2)T_1\,f^\prime(u)-R_0=0,\qquad T_0\,f^\prime(u)-R_1=0.
\end{equation}
Thus we are looking for a common solution of them,
where $T_i=T_i(f,0,u)$ and $R_i=R_i(f,0,u)$, for $i=0,1$. Equivalently,
\begin{equation}\label{eq:dues}
f^\prime(u)=\frac{R_0}{(1-u^2)T_1},\qquad f^\prime(u)=\frac{R_1}{T_0}
\end{equation}
and this question can be tackled with the method that we have introduced above. In particular we have seen that these periodic orbits	are contained in 
\begin{equation}\label{eq:alc}
\mc{H}(f,u):=R_0(f,0,u)T_0(f,0,u)-(1-u^2)R_1(f,0,u)T_1(f,0,u)=0.
\end{equation}
Moreover,  when $\mc{H}(f,u)\not\equiv0,$ they are isolated periodic orbits and, so,  they are limit cycles.	

Notice that $f=r$ and $u=y/r$. 		Hence taking the numerator of $\mc{H}(f,u)=\mc{H}(r,y/r)=0$ we obtain a polynomial $F$ such that the star-like limit cycles are included in $F(r,y)=0.$ We can write $F(r,y)= r G(r^2,y)+ H(r^2,y),$
	for some new polynomials $G$ and $H$. Hence the limit cycles are contained in the algebraic curve
$	r^2G^2(r^2,y)-H^2(r^2,y)=0.$
\end{proof}

\begin{remark} Notice that Theorem~\ref{te:stl} proves that all star-like reversible limit cycles are algebraic. This is no more true for star-like reversible continua of periodic orbits.  This is so because for reversible centers it can be seen that  $R_0=T_1=0$ (see the proof of forthcoming Lemma~\ref{le_f'rev}) and the first equation in~\eqref{eq:dues0} is identically satisfied.  For instance, examples of this situation can be found inside the quadratic reversible centers, also  called Loud centers.
\end{remark}

We end this section with a couple of examples.

Consider the following system
\begin{equation*}
\begin{cases}
\dot{x}=a_1y+a_2xy^2,\\
\dot{y}=b_1 x+b_2 y+ b_3 x^3+b_4 x^2 y+ b_5y^3.
\end{cases}
\end{equation*}
By following the proof of Theorem \ref{te:stl}, its star-like reversible limit cycles are contained in \eqref{eq:alc}, that is  $\mc{H}(f,u)=R_0T_0-(1-u^2)R_1T_1,$
where
\begin{align*}
R_0&=\big((a_2+b_4)(1-u^2)+b_5u^2\big)u^2f^3+b_2u^2f,\\
R_1&= b_3u(1-u^2)f^3+u(a_1+b_1)f,\\
T_0&=b_3(1-u^2)^2f^2-a_1u^2+b_1(1-u^2),\\
T_1&=b_2u-\big(a_2u^2-b_5u^2-b_4(1-u^2)\big)uf^2.
\end{align*}
Thus 
 $$\mc{H}(f,u)=u^2f\Big(-a_1b_2+\big((1-u^2)(a_2b_1-a_1b_4) -a_1b_5u^2\big)f^2+(1-u^2)^2a_2b_3f^4\Big).$$
  A similar expression to $\mc{J}_2(f,u),$  but calculating   the derivative of function $\widetilde{ \mc{H}}(f,u)=\mc{H}(f,u)/(u^2f)$ with respect to $u$ where $f=f(u)$ satisfies the second differential equation in~(\ref{eq:dues}), instead of the derivative of $\mc{H}(f,u)$, is
 $$\widetilde{\mc{J}}_2(f,u)=2a_1uf^2\big(b_3(1-u^2)(2a_2-b_5)f^2- a_1b_4+a_2b_1-b_1b_5\big).$$
Following the steps of the algorithm described in Section \ref{se:com} we perform the resultant between $\widetilde{\mc{H}}$ and ${{\widetilde{\mc{J}}_2}}/{(uf^2)}$ with respect to $f.$
We  obtain the polynomial
$$G(u)=16\,a_1^4\,b_3^2\,(1-u^2)^2 H^2(u),$$
where
\[
\begin{split}
H(u)=&\big(a_1^2a_2b_4^2-2a_1^2a_2b_4b_5-a_1^2b_4^2b_5 +a_1^2b_4b_5^2 -2a_1a_2^2b_1b_4-a_2^2b_1^2b_5+2a_1a_2^2b_1b_5\\&+4a_1a_2^2b_2b_3
+2a_1a_2b_1b_4b_5 -3a_1a_2b_1b_5^2+a_1b_1b_5^3+a_2^3b_1^2-4a_1a_2b_2b_3b_5\\&-a_1b_1b_4b_5^2+a_1b_2b_3b_5^2
\big)u^2-a_1^2a_2b_4^2+a_1^2b_4^2b_5+2a_1a_2^2b_1b_4 -4a_1a_2^2b_2b_3\\&-2a_1a_2b_1b_4b_5+4a_1a_2b_2b_3b_5
+a_1b_1b_4b_5^2-a_1b_2b_3b_5^2-a_2^3b_1^2+a_2^2b_1^2b_5.
\end{split}
\]
Imposing that the coefficients with respect to $u$ of $H(u)$ vanish we obtain  several sets of solutions, that might indicate the existence of periodic orbits in the differential system. One of these sets is $a_2 = b_5/2$, $b_1= -2a_1b_4/b_5$, and $a_1, b_2, b_4, b_3,$  $b_5\ne0$ free parameters.

For instance  taking $a_1=-1$, $b_2=1$, $b_4=0$, $b_3=1$, $b_5=-2$ we obtain  the system 
\begin{align}\label{sys:cla}
\begin{cases}
\dot{x}=-y(1+xy),\\
\dot{y}=y+x^3-2y^3.
\end{cases}
\end{align}
 The corresponding differential equations \eqref{sp1} obtained from \eqref{sis:f'g'} are
\begin{align*}
\begin{array}{cc}
a) \begin{cases}
\dot{f}=-uf(f^2u^2+f^2-1),\\
\dot{u}=(1-u^2)(1-f^2u^2),
\end{cases}
&\quad
b) \begin{cases}
\dot{f}=uf\big((1-u^2)f^2-1\big), \\
\dot{u}=(1-u^2)^2f^2+u^2.
\end{cases}
\end{array}\phantom{aaaaaa}
\end{align*}
They share the algebraic curve $(1-u^2)^2f^4+2f^2u^2-1=0$ that gives rise to  
the  algebraic star-like reversible limit cycle 
  $x^4+2y^2-1=0,$ with cofactor $-4y^2.$ Moreover, since this cofactor does not change sign we can prove that system~\eqref{sys:cla} has no more limit cycles, algebraic or not. This fact follows by using the ideas introduced in~\cite[p. 92]{CGL}.

\medskip
Another  set of suitable values of the parameters  is
$a_1=20$, $a_2=2,$ $b_1=20$,  $b_3=-20$, $b_4=-2$, $b_5=4$  and $b_2$ free. This gives 
\begin{align}\label{sys:cla2}
\begin{cases}
\dot{x}=2y(10+xy),\\
\dot{y}=20x+b_{2}y-20x^3-2x^2y+4y^3,
\end{cases}
\end{align}
and the corresponding differential equations \eqref{sp1} obtained from \eqref{sis:f'g'}  are
\begin{align*}
\begin{array}{cc}
a) \begin{cases}
\dot{f}=uf\big(4u^2f^2+b_{2}\big),\\
\dot{u}=(1-u^2)(4u^2f^2-2f^2+b_{2}),
\end{cases}
&\!\!\!
b) \begin{cases}
\dot{f}=uf\big((1-u^2)f^2-2\big), \\
\dot{u}=(1-u^2)^2f^2+2u^2-1.
\end{cases}
\end{array}
\end{align*}
By applying our approach we get the common solution to both systems  which is given by
the curve
$$2(1-u^2)^2f^4+4(2u^2-1)f^2+b_{2}=0.$$ 
 In cartesian coordinates it corresponds to the invariant algebraic curve $2x^4+4y^2-4x^2+b_2=0,$ with cofactor $8y^2.$   It can be seen that for $b_{2}<0$ it gives rise to an algebraic star-like  reversible  limit cycle, that surrounds three critical points. When $b_{2}=0$ it gives a double heteroclinic loop, while for  $0<b_{2}<2$ provides two algebraic limit cycles, one being the mirror image of the other with respect to the line $x=0,$ and each one of them surrounding a unique critical point. Again, as in system~\eqref{sys:cla}, the fact that the cofactor does not change sign implies that the only limit cycles of  system~\eqref{sys:cla2}, are the ones contained in this algebraic curve. This example recovers previous results in~\cite{LliZha2007}.

\section{Bifurcation of limit cycles from reversible centers}\label{se:abe}

The goal of this section is to prove Theorem~\ref{te:N(a)} and give some examples of application.

\begin{proof}[Proof of Lemma~\ref{le:des}] For a general $H$ the proof simply follows  using that
$v^{2k}=(1-u^2)^k$ and $v^{2k+1}=(1-u^2)^k v$  in each of its
monomials. For $H(x,y)=K(x^2,y)$ simply write
\begin{align*}
H(x^2,y)&=H\big((f+vg)^2v^2, (f+vg) u\big)\\&=
 H\big((1-u^2)f^2+(1-u^2)^2 g^2+2(1-u^2)vfg, u f + u v g\big)
\end{align*}
and perform the (finite) Taylor expansion at the point
 $\big((1-u^2)f^2,u f\big).$
\end{proof}

For short, we will write the decomposition introduced in Lemma~\ref{le:des}
as
\[
H= H_0+v H_1\quad\mbox{and}\quad K=K_{0,0}+
(1-u^2)g^2 K_{0,1}+ v g K_{1,0}.
\]

\begin{lemma}\label{le_f'rev}
The star-like periodic orbits of
\begin{align*}
\dot{x}= A(x^2,y),\qquad\qquad \dot{y}=x B(x^2,y),
\end{align*}
passing through the point $(0,\rho),$ $\rho>0,$ where $A$ and $B$ are  polynomials,  are given by $r=f_\rho(\sin
\theta)$ where $f=f_\rho(u)$ is the solution of the first order
Cauchy initial value problem~\eqref{f'_rev}:
\begin{equation*}
f^{\prime}(u)=\frac{\left(A_{0,0}+u  B_{0,0} f(u) \right)
f(u)}{(1-u^2)B_{0,0} f(u) -uA_{0,0}}, \quad f(0)=\rho,
\end{equation*}
where $A_{0,0}=A\big((1-u^2)f^2,uf\big)$ and
$B_{0,0} =B\big((1-u^2)f^2,u f\big)$.
\end{lemma}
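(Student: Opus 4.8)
The plan is to specialize the general decomposition system~\eqref{sis:f'g'} of Theorem~\ref{teo:sisf'g'} to the reversible field $\dot x=A(x^2,y),\ \dot y=xB(x^2,y)$ and to exploit that its periodic orbits are symmetric. First I would show that every star-like periodic orbit surrounding the center is invariant under the reflection $(x,y)\mapsto(-x,y)$. Indeed, $A$ is even and $B$ is odd in $x$, so $X(-x,y)=X(x,y)$ and $Y(-x,y)=-Y(x,y)$; hence $(x(t),y(t))\mapsto(-x(-t),y(-t))$ sends solutions to solutions. A star-like orbit meets the symmetry axis $x=0$ (at $\theta=\pm\pi/2$), and since its reflection passes through the same axis point, uniqueness of orbits forces the reflection to coincide with the orbit itself. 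Writing $r=F(\theta)=f(\sin\theta)+g(\sin\theta)\cos\theta$ as in Corollary~\ref{co:fg}, invariance reads $F(\theta)=F(\pi-\theta)$, and because $\sin(\pi-\theta)=\sin\theta$, $\cos(\pi-\theta)=-\cos\theta$, this forces $g\equiv 0$. Thus the orbit has the announced form $r=f(\sin\theta)$.

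Next I would substitute $g=0$, i.e. $r=f$, $x=fv$, $y=fu$, $x^2=(1-u^2)f^2$ (with $u=\sin\theta$, $v=\cos\theta$, $v^2=1-u^2$) into the numerator $R$ and denominator $T$ of the polar equation~\eqref{ecpol}. Using the reversible structure $xX+yY=x(A+yB)$ and $xY-yX=x^2B-yA$, and noting that after the substitution $A(x^2,y)=A_{0,0}$ and $B(x^2,y)=B_{0,0}$ become functions of $(f,u)$ alone, free of $v$, one obtains
\[
R=\frac{xX+yY}{r}=v\big(A_{0,0}+uB_{0,0}f\big),\qquad T=\frac{xY-yX}{r^2}=(1-u^2)B_{0,0}-\frac{uA_{0,0}}{f}.
\]
Reading off the decompositions $R=R_0+vR_1$, $T=T_0+vT_1$ of Lemma~\ref{le:des}, this gives $R_0=0$, $T_1=0$, together with $R_1=A_{0,0}+uB_{0,0}f$ and $T_0=(1-u^2)B_{0,0}-uA_{0,0}/f$. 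The divisibility of $A_{0,0}$ by $f$, a consequence of $A(0,0)=0$, guarantees that $T_0$ is genuinely polynomial.

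With $R_0=T_1=0$ and $g=0$, the two equations of~\eqref{sis:f'g'} collapse to~\eqref{eq:dues0}: the first becomes $0=0$ identically, while the second, $T_0f'-R_1=0$, yields
\[
f'(u)=\frac{R_1}{T_0}=\frac{A_{0,0}+uB_{0,0}f}{(1-u^2)B_{0,0}-uA_{0,0}/f}=\frac{\big(A_{0,0}+uB_{0,0}f\big)f}{(1-u^2)B_{0,0}f-uA_{0,0}},
\]
which is precisely~\eqref{f'_rev}. The cancellation of $\cos\theta$ (that is, of $v$) is valid for $u\in(-1,1)$, and since $f\in\mathcal{C}^\infty([-1,1])$ by Corollary~\ref{co:fg}, the identity extends to $u=\pm1$ by continuity. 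The initial condition then follows by evaluating at $\theta=0$, where $u=0$ and $r=\rho$, giving $f(0)=\rho$.

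I expect the main obstacle to be the bookkeeping that produces $R_0=T_1=0$: one must track the parity in $v$ carefully, since it is exactly the reflection symmetry that annihilates the $v$-even part of $R$ and the $v$-odd part of $T$, and one must dispose of the apparent factor $1/f$ in $T$ by invoking $A(0,0)=0$. The other delicate point is justifying rigorously that all the relevant star-like periodic orbits are symmetric, hence $g\equiv 0$; once this is in place, everything after the substitution is routine algebraic simplification.
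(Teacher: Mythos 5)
Your proposal is correct and follows essentially the same route as the paper: set $g\equiv 0$ in system~\eqref{sis:f'g'}, compute that $R_0=T_1=0$, $R_1=A_{0,0}+uB_{0,0}f$ and $T_0=(1-u^2)B_{0,0}-uA_{0,0}/f$, and read the ODE off the second equation of~\eqref{eq:dues0}. The paper's proof simply starts from $g=0$ without comment, so your preliminary symmetry argument (the orbit is invariant under $(x,y)\mapsto(-x,y)$, hence $g\equiv 0$ by uniqueness of the decomposition) and your remark that $A(0,0)=0$ makes $uA_{0,0}/f$ polynomial are welcome additions rather than deviations.
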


\begin{proof}  By using Theorem~\ref{teo:sisf'g'}  with $g(u)=0$, the two differential equations~\eqref{sis:f'g'} write as  
\begin{equation*}
\begin{cases}
(1-u^2)T_1f^\prime(u)-R_0=0,\\
T_0f^\prime(u)-R_1=0.
\end{cases}
\end{equation*}	
Some computations give that 
\begin{align*}
R_0=0,\, R_1=A_{0,0}+uf(u)B_{0,0},\,
T_0=(1-u^2)B_{0,0}-uA_{0,0}/f(u)\,\,\,\mbox{and}\,\,\, T_1=0.
\end{align*}
Therefore, the first equation of the above system is identically satisfied and the second one 
gives the desired differential equation.
\end{proof}

\begin{proof}[Proof of Theorem \ref{te:N(a)}]
	
Consider system
\begin{align*}
	\begin{cases}
		\dot{x}= X(x,y;\varepsilon)= c(x,y)+\ep C(x,y)= A(x^2,y)+\ep C(x,y),\\
		\dot{y}=Y(x,y;\varepsilon)=d(x,y)+\ep D(x,y)=x B(x^2,y)+\ep
		D(x,y).
	\end{cases}
\end{align*}
By Theorem~\ref{teo:sisf'g'} we have to find solutions of system~\eqref{sis:f'g'} associated to the above planar system that are well defined in $[-1,1].$ After some computations we get that the functions appearing in~\eqref{sis:f'g'} are:
\begin{align*}
R_0=&(1-u^2)X_1+u Y_0,\\
	R_1=& X_0+uY_1,\\
	T_0=&\frac{(u^2Y_1+uX_0-Y_1)f-(1-u^2)(uX_1-Y_0)g  }{(1-u^2)g^2-f^2},\\
	T_1=&\frac{(uX_1-Y_0)f-(u^2Y_1+uX_0-Y_1)g}{(1-u^2)g^2-f^2}  ,
\end{align*}
where for each $\varepsilon$ we have decomposed $X$ and $Y$ as in Lemma~\ref{le:des}. Equivalently,
\begin{align*}
	R_0=&(1-u^2)c_1+u d_0+\varepsilon \left((1-u^2)C_1+u D_0\right) ,\\
	R_1=& c_0+ud_1 +\varepsilon \left( C_0+uD_1\right)
\end{align*}	
and
\begin{align*}	
	T_0=&\frac{(u^2d_1+uc_0-d_1)f-(1-u^2)(uc_1-d_0)g  }{(1-u^2)g^2-f^2}\\&+\varepsilon\frac{(u^2D_1+uC_0-D_1)f-(1-u^2)(uC_1-D_0)g  }{(1-u^2)g^2-f^2},\\
	T_1=&\frac{(uc_1-d_0)f-(u^2d_1+uc_0-d_1)g}{(1-u^2)g^2-f^2}\\& +\varepsilon \frac{(uC_1-D_0)f-(u^2D_1+uC_0-D_1)g}{(1-u^2)g^2-f^2}.
\end{align*}
Using that $c(x,y)=A(x^2,y)$ and $d(x,y)=xB(x^2,y)$ and again the notation of~Lemma~\ref{le:des} we have that
\begin{align*}
c=&c_0+vc_1= A_{0,0}+ (1-u^2) g^2 
A_{0,1}+vg A_{1,0},\\
d=& d_0+vd_1=(f+vg)v\big( B_{0,0}+ (1-u^2) g^2 
B_{0,1}+vg B_{1,0}\big)=\\=&B_{0,1}(1-u^2)^2g^3+(1-u^2)(B_{1,0}f+B_{0,0})g+v\big((1-u^2)(B_{0,1}f+B_{1,0})g^2+B_{0,0}f \big).
\end{align*}
Replacing these equalities  in the above ones   we get that
\begin{align*}
	R_0=& B_{0,1} (1-u^2)^2u g^3+(1-u^2)(B_{1,0}fu+B_{0,0}u+A_{1,0})g	
	\\&+\ep\big( (1-u^2)C_1+uD_0\big),\\
	R_1=& (1-u^2)(B_{0,1}fu+B_{1,0}u+A_{0,1})g^2+B_{0,0}uf+A_{0,0}
			\\&+\ep\big(C_0+uD_1 \big)
\end{align*}	
and
\begin{align*}
	 	T_0=& \frac{B_{0,1}(1-u^2)^3g^4+ (B_{0,0}- B_{0,1}f^2)(1-u^2)^2g^2 }{(1-u^2)g^2-f^2}\\ 
	 	&+\frac{(1-u^2)(A_{0,1}f-A_{1,0})ug^2-B_{0,0}(1-u^2)f^2+A_{0,0}uf}{(1-u^2)g^2-f^2}
\\&+\varepsilon\frac{(u^2D_1+uC_0-D_1)f-(1-u^2)(uC_1-D_0)g  }{(1-u^2)g^2-f^2},\\
	T_1=&\frac{ 	
B_{1,0}(1-u^2)^2g^3-(1-u^2)(A_{0,1}ug^3+B_{1,0}f^2g)+ug(A_{1,0}f-A_{0,0})
}{(1-u^2)g^2-f^2} \\&	
 +\varepsilon \frac{(uC_1-D_0)f-(u^2D_1+uC_0-D_1)g}{(1-u^2)g^2-f^2}.
\end{align*}

We look for solutions of the form
$r=F(\theta,\ep)=f(u,\ep)+v g(u,\ep),$ where
\begin{align*}
f(u,\ep)&=f_{\rho}(u)+\ep P(u)+O(\ep^2),\\
g(u,\ep)&=\ep Q(u)+O(\ep^2)
\end{align*}
and recall that  $f_\rho(u)$ is given in Lemma~\ref{le_f'rev}.

Next we plug the above expressions of $T_0,T_1,R_0,R_1,$ $f(u,\ep)$ and $g(u,\ep)$ in system~\eqref{sis:f'g'}.	After some tedious but straightforward computations we get that the coefficient with respect to $\ep$  of order one of the  equation corresponding to $g$ gives
\begin{equation*}
Q^\prime(u)=L_{\rho}(u)Q(u)+M_{\rho}(u),
\end{equation*}
where $L_{\rho}$ and $M_\rho$ are as in the statement of the theorem.
The general solution of this first order linear differential equation is
$$
Q(u)=
{\rm e}^{\int_0^uL_{\rho}(s){\rm d}s}\left(m+\int_0^uM_{\rho}(w){\rm e}^{-\int_0^w
	L_{\rho}(s){\rm d}s}{\rm d}w\right),
$$
 where $m$ is a constant of integration. Since ${\rm e}^{\int_0^uL_{\rho}(s){\rm d}s}$ diverges for $u=\pm 1$,  due to the singularities at the denominator of $L_\rho$ at $u=\pm1,$  we need two conditions for  $Q(u)$ to be  well defined at these two points:	
$$
m+\int_0^1M_{\rho}(u){\rm e}^{-\int_0^u L_{\rho}(s){\rm d}s}{\rm d}u=0\quad\mbox{and}\quad m+\int_0^{-1}M_{\rho}(u){\rm e}^{-\int_0^u
	L_{\rho}(s){\rm d}s}{\rm d}u=0.
$$
Joining these equations we obtain that a necessary condition for the persistence of a perturbed periodic orbit is that
$$N(\rho):=\int_{-1}^1M_{\rho}(u){\rm e}^{-\int_0^u L_{\rho}(s){\rm d}s}{\rm d}u=0,$$
at this value of $\rho,$ as we wanted to prove.
\end{proof}

\noindent We end the paper with a couple of   applications of Theorem~\ref{te:N(a)}.

 Consider  the system
\[\begin{cases}
\dot{x}=y-x^2y+\ep(a_{1}x+a_{2}xy^2+a_{3}x^3),\\
\dot{y}=-x-xy^2+\ep(b_{1}y+ b_{2}x^2y+b_{3}y^3).\end{cases}
\]
 When  $\epsilon=0,$ it has the first integral $H(x,y)=(y^2+1)/(x^2-1)$ and a center at the origin.

We apply Theorem~\ref{te:N(a)} 
with
\begin{align*}
L_{\rho}(u)=\frac{u(1+3(1-u^2)f_{\rho}^2(u))}{1-u^2},\quad{\rm and}\quad
M_{\rho}(u)=\frac{P_\rho(u)}{1-u^2},
\end{align*}
where
\begin{align*}
P_\rho(u)&=\,u^2(1-u^2)\big(b_2-a_{3}+(a_{3}-a_{2}+b_{3}-b_{2})u^2\big)f_{\rho}^5(u)
\\&+\big(-b_{3}u^4+(1-u^2)(-a_{3}+(a_{3}-a_{1}-a_{2}+b_{1}-b_{2})u^2 \big)f_{\rho}^3(u)\\&+\big(-a_{1}+(a_{1}
-b_{1})u^2\big)f_{\rho}(u).
\end{align*}

Moreover, in this case $f_{\rho}(u)$ is the solution of 
$
f^{\prime}=uf^3,$ with $f(0)=\rho.
$
So, $f_{\rho}(u)=\rho/\sqrt{1-\rho^2u^2}.$ As a consequence, 
we have that
\begin{align*}
{\rm e}^{-\int_0^u 	L_{\rho}(s){\rm d}s}=&{\rm e}^{\int_0^u \frac{s(4\rho^2 s^2-3\rho^2-1)}{(1-\rho^2s^2)(1-s^2)}{\rm d}s}
=(1-\rho^2 u^2)^{3/2}(1-u^2)^{1/2},\\
M_{\rho}(u)&=\frac{Q_\rho(u)}{(1-\rho^2u^2)^{5/2}(1-u^2)},
\end{align*}
where
\begin{align*}
Q_\rho(u)=&(b_{3}-b_{1}-b_{2})u^4\rho^5+b_{2}u^2\rho^5+(a_{2}-a_{1}-a_{3}+b_{1}+b_{2}-b_{3})u^4\rho^3\\&+(a_{1}-a_{2}+2a_{3}+b_{1}-b_{2})u^2\rho^3-a_{3}\rho^3+(a_{1}-b_{1})u^2\rho-a_{1}\rho.
	\end{align*}
	
After some more computations  we obtain
\[
N(\rho)=\int_{-1}^1\frac{Q_\rho(u)}{(1-\rho^2u^2)(1-u^2)^{1/2}}{\rm d}u=\frac{ \pi q(\rho)}{2\rho},
\]
where
\begin{align*}
	q(\rho)=&(b_{2}-b_{1}+b_{3})(1-\rho^2)^2+2(a_{3}-b_{2})(1-\rho^2)^{3/2}-(a_{1}-a_{2}+3a_{3}
	\\&-b_{1}+3b_{3}
	-b_{2})(1-\rho^2)-2(a_{2}-b_{3})(1-\rho^2)^{1/2}+a_{1}+a_{2}+a_{3}.
	\end{align*} 
	Doing the change of variable $\lambda^2=1-\rho^2$ and by using that $\lambda-1$ is a factor of $q(\rho)$ the positive zeroes of $N$ can be found from the solutions of
\begin{equation*}
	(b_{2}-b_{1}+b_{3})\lambda^3\!+\!(2a_{3}-b_{2}-b_{1}+b_{3})\lambda^2-(a_{1}-a_{2}+a_{3}
	+2b_{3})
	\lambda-a_{1}-a_{2}-a_{3}=0.
	\end{equation*} So, by Theorem~\ref{te:N(a)},  at most 3 periodic orbits can persist.

A similar result could be obtained using Abelian integrals. The above system  is already studied in~\cite{CheLiLliZh2002}.

As a second example, consider  the system
\[\begin{cases}
\dot{x}=y-y^2+\ep(ax+bxy),\\
\dot{y}=-2x.\end{cases}
\]
The unperturbed system is hamiltonian, with hamiltonian function
\[
H(x,y)=x^2+\frac{y^2}{2}-\frac{y^3}{3}
\] 
and  has a center at the origin. 
 In the expression of  $N(\rho)$ given in  Theorem~\ref{te:N(a)} the functions $L_\rho$ and $M_\rho$ are
\begin{align*}
L_{\rho}(u)&=\frac{u\left(6+2u^4-7u^2+(4u-10u^3+4u^5)f_{\rho}(u)+(2u^2-1)u^4f_{\rho}^2(u)\right)}{(1-u^2)(u^3f_{\rho}(u)+u^2-2)^2},\\
M_{\rho}(u)&=\frac{-2f_{\rho}(u)(a+buf_{\rho}(u))}{(u^3f_{\rho}(u)+u^2-2)^2},
\end{align*}
where $f_{\rho}$ satisfies
\begin{equation*}
f^\prime=\frac{-uf(uf+1)}{u^3f+u^2-2},\quad f(0)=\rho.
\end{equation*}
Then,
$$N(\rho)=0  \Longleftrightarrow \int_{-1}^1\frac{-2f_{\rho}(u)(a+buf_{\rho}(u))}{(u^3f_{\rho}(u)+u^2-2)^2}\,{\rm e}^{-\int_0^u L_{\rho}(s){\rm d}s}{\rm d}u=0,$$
or equivalently,
\begin{equation*}
-\frac{a}{b}=\frac{\int_{-1}^1\frac{uf_{\rho}^2(u)}{(u^3f_{\rho}(u)+u^2-2)^2}{\rm e}^{-\int_0^u L_{\rho}(s){\rm d}s}{\rm d}u}
{\int_{-1}^1\frac{f_{\rho}(u)}{(u^3f_{\rho}(u)+u^2-2)^2}{\rm e}^{-\int_0^u
		L_{\rho}(s){\rm d}s}{\rm d}u},
\end{equation*}
is a condition for a periodic orbit to persist.	

 After some tricky computations we get that
\[
-\frac{a}{b}=\frac{\int_{-1}^1\frac{uf_{\rho}^2(u)}{(u^3f_{\rho}(u)+u^2-2)^2}{\rm e}^{-\int_0^u L_{\rho}(s){\rm d}s}{\rm d}u}
{\int_{-1}^1\frac{f_{\rho}(u)}{(u^3f_{\rho}(u)+u^2-2)^2}{\rm e}^{-\int_0^u
		L_{\rho}(s){\rm d}s}{\rm d}u}=\frac{\int_{H=h}xy\,{\rm d}y}{\int_{H=h}x\,{\rm d}y},
\]
where $h=\rho^2.$ Notice that the last fraction  corresponds to the one obtained by using the classical Abelian integrals approach to know which periodic orbits of the hamiltonian system persist. Following the results of Drachman, van Gils and Zhang (\cite{DVZ}) the above equation has at most one solution, so, at most  one periodic orbit persists.



\subsection*{Acknowledgements}
The first author is supported by FONDECyT grant number 11171115.
The second author is supported by 
Ministerio de Ciencia, Innovación y Universidades of the Spanish Government through grants MTM2016-77278-P
(MINECO/AEI/FEDER, UE) and  
by  grant 2017-SGR-1617  from
AGAUR,  Generalitat de Catalunya.


\begin{thebibliography}{25}


\bibitem{Alw2009}  M. A. M. Alwash, {\it The composition conjecture for Abel equation}. Expo. Math. {\bf 27} (2009),  241--250.

\bibitem{CGL} J. Chavarriga, H. Giacomini and J. Llibre, {\it Uniqueness of algebraic limit cycles for quadratic systems}. J. Math. Anal. Appl. {\bf 261} (2001),  85--99.

\bibitem{CL}  C. Christopher and C. Li, Limit cycles of differential equations. Advanced Courses in Mathematics. CRM Barcelona. Birkhäuser Verlag, Basel, 2007. viii+171 pp.

\bibitem{DVZ}  B. Drachman, S. A. van Gils and Z. Zhang, {\it  Abelian integrals for quadratic vector fields}. J. Reine Angew. Math. {\bf 382} (1987), 165--180.



\bibitem{Llibre} F.  Dumortier, J. Llibre and J. C.  Artés, Qualitative theory of planar differential systems. Universitext. Springer-Verlag, Berlin, 2006. xvi+298 pp.




 \bibitem{Ilyashenko} Y. Ilyashenko, {\it Centennial history of Hilbert's 16th problem}. Bull. Amer. Math. Soc. (N.S.)  {\bf 39} (2002),  301--354.
 
 \bibitem{Yakovenko}  Y. Ilyashenko and S. Yakovenko, Lectures on analytic differential equations. Graduate Studies in Mathematics, 86. American Mathematical Society, Providence, RI, 2008. xiv+625.
 
 \bibitem{Ka}  Y. Katznelson, An introduction to harmonic analysis. Third edition. Cambridge Mathematical Library. Cambridge University Press, Cambridge, 2004. xviii+314 pp.
 
 \bibitem{CheLiLliZh2002} C. Li, W. Li, J. Llibre and Z. Zhang, {\it Linear estimation of the number of zeros of Abelian integrals for some cubic isochronous centers.} J. Differ. Equ. {\bf 180} (2002), 307--333.
 
 \bibitem{JibinLi}  J. Li, {\it Hilbert's 16th problem and bifurcations of planar polynomial vector fields}. Internat. J. Bifur. Chaos Appl. Sci. Engrg.  {\bf 13} (2003),  47--106.
 
 \bibitem{LZ2012} J. Llibre and  X. Zhang,
{\it  On the Darboux integrability of the polynomial differential systems.}
 Qual. Theory Dyn. Syst. {\bf 11} (2012), 129--144.

\bibitem{LliZha2007} J. Llibre and Y. Zhao, {\it Algebraic limit cycles in polynomial systems of differential equations.} J. Phys. A {\bf 40} (2007), no. 47, 14207--14222.



\bibitem{Sha}  A. Shadrin, {\it Twelve proofs of the Markov inequality.} Approximation theory: a volume dedicated to Borislav Bojanov,  233--298, Prof. M. Drinov Acad. Publ. House, Sofia, 2004.



\bibitem{Ye} Y. Ye {\it et others},  Theory of limit cycles. Translated from the Chinese. Translations of Mathematical Monographs, 66. American Mathematical Society, Providence, RI, 1986. xi+435 pp.

\bibitem{Zhang} Z.  Zhang {\it et others}, Qualitative theory of differential equations. Translated from the Chinese. Translations of Mathematical Monographs, 101. American Mathematical Society, Providence, RI, 1992. xiv+461 pp.




\end{thebibliography}
\end{document}